\newcommand{\norm}[1]{\|#1\|_2}
\newcommand{\linspace}[2][ ]{\mathbb{#2}^{#1}}
\newcommand{\coarsevar}{\mathcal{C}}
\newcommand{\finevar}{\mathcal{F}}
\newcommand{\tvV}{\mathcal{V}}
\newcommand{\tvU}{\mathcal{U}}
\author{J.~Brannick\footnotemark[3] and K.~Kahl\footnotemark[4]}
\begin{document}

\title{Bootstrap Algebraic Multigrid for the 2D Wilson Dirac system}
\date{\today}
\renewcommand{\thefootnote}{\fnsymbol{footnote}}
\maketitle

\begin{keywords}
QCD, Wilson discretization, bootstrap AMG, Kaczmarz relaxation, odd-even reduction, multigrid eigensolver.
\end{keywords} 

\begin{AM}
65F10, 65N55, 65F30
\end{AM}

\begin{abstract}
We develop an algebraic multigrid method for solving the non-Hermitian
Wilson discretization of the 2-dimensional Dirac equation.
The proposed approach uses a bootstrap
setup algorithm based on a multigrid eigensolver. It computes test
vectors which define the least squares interpolation operators by
working mainly on coarse grids, leading to an efficient and
integrated self learning process for defining algebraic multigrid interpolation.  The
algorithm is motivated by the $\gamma_5$-symmetry of the Dirac
equation, which carries over to the Wilson discretization. This
discrete $\gamma_5$-symmetry is used to reduce a general Petrov Galerkin bootstrap
setup algorithm 
to a Galerkin method for the Hermitian and indefinite formulation of the Wilson matrix.
Kaczmarz relaxation is used as the multigrid smoothing scheme in both
the setup and solve phases of the resulting Galerkin algorithm.  The overall method is
applied to the odd-even reduced Wilson matrix, 
which also fulfills the discrete $\gamma_5$-symmetry. 
Extensive numerical results are presented to motivate the design and demonstrate the effectiveness
of the proposed approach.
\end{abstract}

%\footnotetext[0]{Date received: }
\footnotetext[3]{Department of Mathematics,
Pennsylvania State University, University Park, PA 16802, USA (brannick@psu.edu). \thanks{Brannick's work was supported by the National Science Foundation under grants OCI-0749202 and DMS-810982.}}
\footnotetext[4]{Fachbereich Mathematik und Naturwissenschaften,
Bergische Universit\"at Wuppertal, D-42097 Wuppertal, Germany, 
(kkahl@math.uni-wuppertal.de). \thanks{Kahl's work was supported by the
  Deutsche Forschungsgemeinschaft through the Collaborative Research
  Centre SFB-TR 55 ``Hadron Physics from Lattice QCD''}}
\renewcommand{\thefootnote}{\arabic{footnote}}

\section{Introduction}\label{sec:intro}
Lattice quantum chromodynamics (QCD) is a numerical approach for
computing observables of quarks,  elementary particles, in cases where
perturbative methods diverge, see~\cite{DeGrand:2006zz} for an overview.  
Simulations of quarks require
approximating the QCD path integral using Monte Carlo methods, which 
involves generating discrete realizations of the gauge fields and,
then, computing observables by averaging over  these ensembles of
configurations.  In both of these stages of a lattice QCD calculation, the
discretized Dirac equation
\begin{equation}\label{eq:dirac}
  D \psi = b
\end{equation}
needs to be solved for numerous realizations of the gauge fields and, then, 
multiple right hand sides for each configuration.  
In this paper, we consider Wilson's discretization~\cite{PhysRevD.10.2445} of the Dirac
equation so that $D = D_0 +mI$, where $D_0$ denotes the non-Hermitian
mass-less Wilson matrix and the shift $m$ is related to the mass of
the quarks. We refer to $D$ as simply the Wilson matrix implying that a
particular shift $m$ is associated with it.

All existing lattice QCD algorithms suffer from what is referred to as {\em critical slowing down}, which is a direct result of 
the structure of the Wilson matrix. Specifically, as the shift, $m$, approaches 
physically relevant values the minimal eigenvalues
of $D$ approach zero linearly, which leads to a highly ill-conditioned system of equations and 
the stalling convergence 
of standard Krylov subspace methods
when applied to this system. As a result, the overall simulation becomes too costly at light masses
and up until now this has led to the use of non-physical heavy quark masses in lattice QCD simulations.
This, in turn, has motivated the extensive research 
that has been dedicated to the development of suitable multigrid preconditioners for
discretizations of the Dirac equation 
over the past three decades, see~\cite{Brower:1990ac,Brower:1991en,Brower:1991xv,Brower:1987dd,Brower:1990at,Brower:1991ni,Harmatz1991102}.  

The task of designing effective multigrid preconditioners for the
Wilson matrix 
is further complicated by the fact that the near kernel modes, i.e.,
the vectors $x$ such that $Dx \approx 0$ or $D^Hx \approx 0$, are
locally non-smooth. To be more precise, their entries depend significantly
on the specific values of the
given gauge field configuration, and, 
a precise understanding of
this dependence is not well understood
theoretically, yet. 
As a result, while earlier efforts in designing multigrid methods for
discretizations of the Dirac equation did lead to marked improvements
in some cases, methods with the potential to effectively remove
critical slowing down in general have emerged only in the past few
years in the context of adaptive~\cite{MBrezina_RFalgout_SMacLachlan_TManteuffel_SMcCormick_JRuge_2003}
or bootstrap~\cite{BAMG2} algebraic multigrid (AMG).

The main new component of the adaptive and bootstrap AMG approaches is the idea to 
use the AMG hierarchy to expose prototype(s) of the near kernel (test vectors)
that are not effectively treated by the solver and, then, adapt the
coarse spaces to incorporate them. In \emph{adaptive
AMG}~\cite{MBrezina_2005,MBrezina_RFalgout_SMacLachlan_TManteuffel_SMcCormick_JRuge_2003},
the solver is applied to appropriately formulated homogeneous problems on different grids to
compute a single test vector, which is then used to update the
restriction, interpolation, and coarse-grid operators on all grids.
This gives a new solver which can be used in another adaptive
cycle.  The process is then repeated in a sequence of adaptive cycles
until an efficient solver has been constructed. In contrast,  \emph{bootstrap
AMG} uses relaxation and a multigrid (eigen)solver 
based on the emerging AMG hierarchy to compute a collection of test
vectors in each of the bootstrap cycles and, then, a local least
squares problem is formulated to define interpolation operators that
approximate these vectors collectively.

Adaptive and bootstrap AMG setup algorithms have been
developed for smoothed aggregation multigrid~\cite{MBrezina_RFalgout_SMacLachlan_TManteuffel_SMcCormick_JRuge_2003}, element-free AMG~\cite{Vassilevski_2005}, 
and classical AMG~\cite{BAMG2,BAMG2010,Brannick_Trace_06,MBrezina_2005}. Promising
results of two- and three-grid adaptive aggregation AMG
preconditioners for the Wilson and 
Wilson Clover discretizations of the Dirac equation are found in~\cite{Osborne_POS_2009,Osborne_POS_2010} and in 
these works it has been demonstrated that adaptive AMG techniques can be used to construct effective preconditioners for the Wilson matrix.  
Some progress on combining adaptive AMG and bootstrap AMG techniques to develop
preconditioners for the Wilson and Wilson Clover formulation has also
been made~\cite{Bran_PRL_2010,BAMG2010,Rottman_arxiv}.  
These developments have shown that the bootstrap AMG approach, when combined with adaptive AMG, has the potential to dramatically reduce the costs of the adaptive setup process.  

In this paper, we design and analyze a multigrid solver for the
Wilson matrix based on the bootstrap AMG framework.
The proposed approach builds on our work in~\cite{BAMG2010}, where we
developed bootstrap AMG for solving Hermitian
and positive definite linear systems of equations
\begin{equation*}\label{aa}
A u = f.
\end{equation*}
Of particular interest in this previous work was the development of a bootstrap 
setup algorithm for the gauge Laplacian system with an emphasis on highly disordered 
gauge fields.  Since the gauge Laplacian is Hermitian
and positive definite, a Galerkin scheme based on a variational principle is 
the natural approach for solving this problem.  
The associated two-grid method involves a stationary linear iterative 
method (smoother) applied to the fine-grid system,
and a coarse-grid correction: given an approximation $w\in \mathbb
C^n$, compute an update $v\in \mathbb C^n$ by \medskip
\begin{enumerate}
\item Pre-smoothing: $y = w+M(f-Aw)$,
\item Correction: $v = y + PA_c^{-1}P^{H}(f-Ay), \quad A_c = P^{H}AP$.
\end{enumerate}\medskip
Here, $M$ is the approximate inverse of $A$ that defines the smoother and
$P: \mathbb{C}^{n_c} \mapsto \mathbb{C}^n$ with $n_c < n$ is the
interpolation operator that maps information from the coarse to the
fine grid. In the variational Galerkin scheme, restriction is defined by
the conjugate transpose $P^{H}$ of $P$.
Thus the error propagation operator for a Galerkin
two-grid method with one pre-smoothing step is given by 
\begin{equation*}\label{2-level}
E_{G} = (I - PA_c^{-1}P^{H}A)(I-MA).
\end{equation*} 
A multigrid algorithm is then obtained by recursively solving the coarse-grid
error equation, involving $A_c$, using another two-grid method.

Generally, there are two AMG approaches for solving non-Hermitian
problems like \eqref{eq:dirac} involving the non-Hermitian Wilson matrix $D$,
Galerkin~\cite{BankDupont,Bramble_93} and Petrov Galerkin
\cite{Bran_PRL_2010,gesa,Sala:2008:NPS:1461600.1461602} 
methods. A Petrov Galerkin method differs from the above Galerkin
approach in that the restriction operator $R:\mathbb{C}^{n} \mapsto
\mathbb{C}^{n_c}$ is no longer chosen as $P^{H}$.
Consequently the coarse-grid operator is given
by $D_{c} = RDP$.  The error propagation operator of a Petrov Galerkin
method applied to $D$, with one pre-smoothing step is then
\begin{equation*}\label{2-levelpg}
  E_{PG} = (I - PD_{c}^{-1}RD)(I-MD).
\end{equation*}
In~\cite{gesa}, heuristic motivation and two-grid convergence theory
of a Petrov Galerkin AMG approach in which the coarse spaces are constructed 
to approximate left and right singular vectors with small singular values are developed
for non-Hermitian problems. The
basic result that motivates this approach is as follows.

Let $\sigma_1 \leq \ldots \leq \sigma_n$ be the singular values of the
matrix $D$, i.e., $D = U \Sigma V^H$, with $U$ and $V$ unitary, and $\Sigma =
\operatorname{diag}(\sigma_1,...,\sigma_n)$, then it follows that
$\sigma_1 \leq |\lambda| \leq \sigma_n, $ for any eigenvalue $\lambda$
of $D$. 
This suggests that the right and left near kernel vectors, i.e., $x$ and $y$ such
that $$\frac{\|Dx\|}{\|x\|} \approx \min_v \frac{\|Dv\|}{\|v\|} 
\quad \mbox{and} \quad \frac{\|D^Hy\|}{\|y\|} \approx \min_w
\frac{\|D^Hw\|}{\|w\|},$$ 
are dominated by singular vectors rather than eigenvectors. To be more
precise, let $W = V U^H$ and define the Hermitian positive definite
matrices $WD = (D^HD)^{\frac12}$ and $DW =  (DD^H)^{\frac12}$.
Then the original non-Hermitian system $D\psi = b$ can
be reformulated in two ways as an equivalent Hermitian system using
$WD$ or $DW$ as the system matrix. Now, the fact that the eigenvectors
corresponding to the minimal eigenvalues of $WD$ are the right
singular vectors corresponding to the minimal singular values of $D$
and those for $DW$ are the left singular vectors corresponding to
minimal singular values of $D$ and because $WD$ and $DW$ are Hermitian
positive definite, they can be used to derive an approximation
property for the original problem involving $D$, assuming that $R$
is based on left singular vectors and
$P$ is based on right singular vectors corresponding to
small singular values.  Two grid convergence then follows from this approximation property 
together with the use of a suitable smoother.

We use this same reasoning to motivate the design of the proposed 
multigrid bootstrap AMG setup algorithm for the Wilson system.
We begin by considering a general Petrov Galerkin bootstrap AMG setup
strategy for computing left and right singular vectors of 
$D$, implicitly based on the equivalent Hermitian and indefinite formulation of the
singular value decomposition (SVD)~\cite{golub_kahan_1965,lanczos_1961}. 
Then, we use the $\gamma_5$-symmetry that the Dirac matrix satisfies
to derive a  relation between the subspaces spanned by left and right singular
vectors with small singular values, as well as a relation between the
right singular vectors of $D$ and the eigenvectors of the Hermitian
and indefinite form of the Wilson matrix $Z=\Gamma_5 D$, 
where $\Gamma_5$ is a simple unitary matrix. 
Finally, using these observations 
and a structure preserving form of interpolation we are able to reduce
the general Petrov Galerkin approach 
to an equivalent Galerkin based coarsening scheme applied to $Z$.  
Further, because in our construction the resulting Galerkin coarse-grid operators also satisfy the
$\gamma_5$-symmetry on all grids, the equivalence 
of the proposed Petrov Galerkin setup process for $D$ and the Galerkin scheme for
$Z$ also holds on all grids.  

Since the Wilson discretization of the Dirac equation is formulated on
a structured grid, we exploit this structure in the implementation of
the proposed approach.  Instead of solving systems with the Dirac
matrix $D$ we solve linear systems involving the Schur complement resulting from odd-even (red-black) reduction of the Wilson 
matrix $D$ as considered in~\cite{Osborne_POS_2009,MILC}. We use a classical AMG~\cite{geo,oAMG} form of interpolation with full
coarsening (cf.~Section~\ref{sec:coarsening}) on all grids of the hierarchy. 
The non-zero entries of $P$ are chosen based on the structure as well
and interpolation weights are computed row-wise using a least squares 
interpolation approach~\cite{BAMG2010}. 
The setup we use in the
implementation of the proposed algorithm combines a bootstrap setup based on a multigrid
eigensolver with adaptive AMG cycles. 
We mention that such a bootstrap-adaptive setup process has previously been 
considered in the context of developing eigensolvers for computing state vectors in Markov chain
applications~\cite{brannick_markov_2011}.

We chose to use Kaczmarz relaxation both in the setup and solve phases
of the proposed algorithm, which gives a stationary smoother that is 
guaranteed to converge for the Wilson matrix. Using a convergent stationary smoother allows us to 
access and analyze the performance of the resulting bootstrap 
method for computing singular vector approximations in a systematic
way.  This is in contrast to the
aggregation-based solvers in~\cite{Bran_PRL_2010}
and~\cite{Osborne_POS_2009}, where Krylov methods are used as the
multigrid smoother, leading to a non-stationary multigrid iteration,
and in~\cite{Rottman_arxiv}, where a Schwarz alternating procedure is used as
the smoother, which has been observed to diverge for linear systems
with Wilson matrices. 

An outline of the remainder of this paper is as follows.  First, in
Section \ref{Wilson}, we introduce the Wilson discretization of the
Dirac equation and discuss some of the features of this problem that
make it difficult to solve using iterative methods. Then, in Section
\ref{BAMG}, we present a Galerkin coarsening algorithm that combines
the weighted least squares process for constructing interpolation with
bootstrap and adaptive techniques for computing test vectors used in
this construction.  
Section \ref{sec:numres} contains numerical results of the proposed
method applied to the Wilson discretization of the $2$-dimensional Dirac equation.
We end with concluding remarks in Section \ref{sec:conclusions}.

\section{The Wilson discretization of the Dirac equation}\label{Wilson}
In Lattice QCD, the Dirac equation is typically analyzed on a
hypercube with periodic (or anti-periodic) boundary conditions. A 
brief description of the Wilson discretization that is the focus
of this paper is given in this section.

\begin{figure}
\centering
\scalebox{.4}{\begin{picture}(0,0)%
\includegraphics{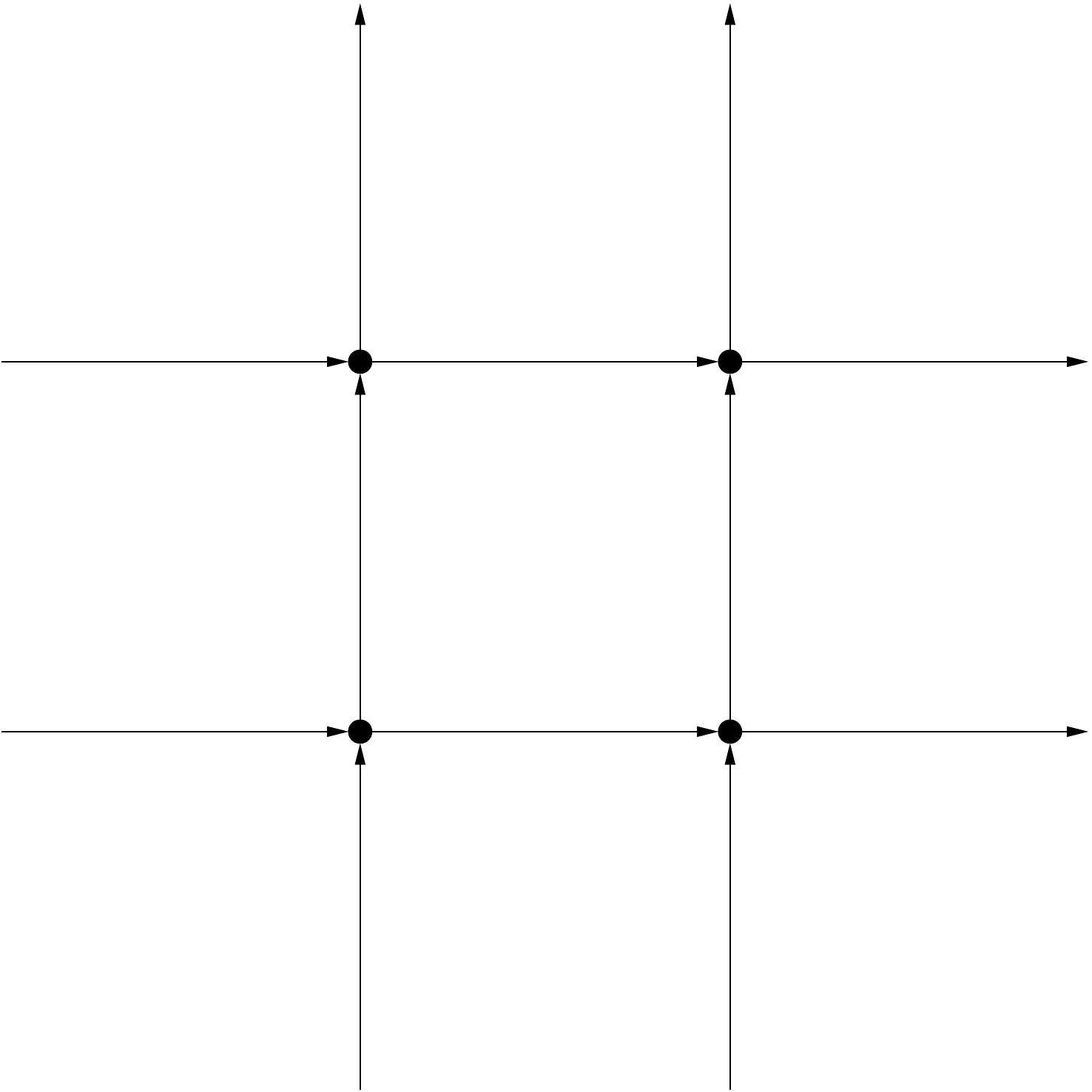}%
\end{picture}%
}
\setlength{\unitlength}{3947sp}%
\begingroup\makeatletter\ifx\SetFigFont\undefined%
\gdef\SetFigFont#1#2#3#4#5{%
  \reset@font\fontsize{#1}{#2pt}%
  \fontfamily{#3}\fontseries{#4}\fontshape{#5}%
  \selectfont}%
\fi\endgroup%
\scalebox{.4}{
\begin{picture}(7074,7074)(64,-6298)
\put(2476,-436){\makebox(0,0)[lb]{\smash{{\SetFigFont{20}{24.0}{\rmdefault}{\mddefault}{\updefault}{\color[rgb]{0,0,0}$U_{y}^{z+e_{y}}$}%
}}}}
\put(4876,-436){\makebox(0,0)[lb]{\smash{{\SetFigFont{20}{24.0}{\rmdefault}{\mddefault}{\updefault}{\color[rgb]{0,0,0}$U_{y}^{z+e_{x}+e_{y}}$}%
}}}}
\put(1051,-1411){\makebox(0,0)[lb]{\smash{{\SetFigFont{20}{24.0}{\rmdefault}{\mddefault}{\updefault}{\color[rgb]{0,0,0}$U_{x}^{z-e_{x}+e_{y}}$}%
}}}}
\put(3451,-1411){\makebox(0,0)[lb]{\smash{{\SetFigFont{20}{24.0}{\rmdefault}{\mddefault}{\updefault}{\color[rgb]{0,0,0}$U_{x}^{z+e_{y}}$}%
}}}}
\put(5851,-1411){\makebox(0,0)[lb]{\smash{{\SetFigFont{20}{24.0}{\rmdefault}{\mddefault}{\updefault}{\color[rgb]{0,0,0}$U_{x}^{z+e_{x}+e_{y}}$}%
}}}}
\put(2476,-1786){\makebox(0,0)[lb]{\smash{{\SetFigFont{20}{24.0}{\rmdefault}{\mddefault}{\updefault}{\color[rgb]{0,0,0}$z+e_{y}$}%
}}}}
\put(4876,-1786){\makebox(0,0)[lb]{\smash{{\SetFigFont{20}{24.0}{\rmdefault}{\mddefault}{\updefault}{\color[rgb]{0,0,0}$z+e_{x}+e_{y}$}%
}}}}
\put(2476,-2836){\makebox(0,0)[lb]{\smash{{\SetFigFont{20}{24.0}{\rmdefault}{\mddefault}{\updefault}{\color[rgb]{0,0,0}$U_{y}^{z}$}%
}}}}
\put(4876,-4186){\makebox(0,0)[lb]{\smash{{\SetFigFont{20}{24.0}{\rmdefault}{\mddefault}{\updefault}{\color[rgb]{0,0,0}$z+e_{x}$}%
}}}}
\put(2476,-4186){\makebox(0,0)[lb]{\smash{{\SetFigFont{20}{24.0}{\rmdefault}{\mddefault}{\updefault}{\color[rgb]{0,0,0}$z$}%
}}}}
\put(1051,-3811){\makebox(0,0)[lb]{\smash{{\SetFigFont{20}{24.0}{\rmdefault}{\mddefault}{\updefault}{\color[rgb]{0,0,0}$U_{x}^{z-e_{x}}$}%
}}}}
\put(3451,-3811){\makebox(0,0)[lb]{\smash{{\SetFigFont{20}{24.0}{\rmdefault}{\mddefault}{\updefault}{\color[rgb]{0,0,0}$U_{x}^{z}$}%
}}}}
\put(2476,-5236){\makebox(0,0)[lb]{\smash{{\SetFigFont{20}{24.0}{\rmdefault}{\mddefault}{\updefault}{\color[rgb]{0,0,0}$U_{y}^{z-e_{y}}$}%
}}}}
\put(4876,-5236){\makebox(0,0)[lb]{\smash{{\SetFigFont{20}{24.0}{\rmdefault}{\mddefault}{\updefault}{\color[rgb]{0,0,0}$U_{y}^{z+e_{x}-e_{y}}$}%
}}}}
\put(5851,-3811){\makebox(0,0)[lb]{\smash{{\SetFigFont{20}{24.0}{\rmdefault}{\mddefault}{\updefault}{\color[rgb]{0,0,0}$U_{x}^{z+e_{x}}$}%
}}}}
\put(4876,-2836){\makebox(0,0)[lb]{\smash{{\SetFigFont{20}{24.0}{\rmdefault}{\mddefault}{\updefault}{\color[rgb]{0,0,0}$U_{y}^{z+e_{x}}$}%
}}}}
\end{picture}}%
  \caption{Naming convention on the grid.\label{fig:boot:gaugeconfig}} 
\end{figure}  
Let $N_s$ denote the number of spin components and $N_c$ the
number of color components of the fields $\psi$, then the action of the
Wilson matrix $D$ on $\psi$ at a grid point $z\in\Omega = \{
1,...,N\}^d$, with $N$ the number of grid points in a given
space-time dimension, reads
\begin{equation}\label{eq:wilson}
 (D\psi)_z =   m_q \psi_z - \sum_{\mu=1}^d \bigg[\big(P_{\mu}^- \otimes  U^{z}_{\mu}\big) \psi_{z + e_\mu}+\big(P_{\mu}^+  \otimes \overline{U}^{z-e_\mu}_{\mu}\big) \psi_{z - e_\mu}\bigg],
 \end{equation}
where $m_q = m + 2(d-1)$ is a scalar quantity, $P_{\mu}^\pm = \frac{(I
  \pm \gamma_{\mu})}{2}$ satisfy $(P_{\mu}^\pm)^2 =
P_{\mu}^\pm,$ with $\gamma_\mu \in \linspace[N_{s}\times N_{s}]{C}$ denoting anti-commuting matrices, i.e.,
$\gamma_\mu\gamma_\nu + \gamma_\nu\gamma_\mu  = 2\delta_{\mu,\nu}I_{N_s}$, and $U_\mu^z
\in SU(N_c)$ are the discrete gauge fields belonging to the Lie group
$SU(N_c)$, $N_c \geq 1$, of 
$N_c \times N_c$ unitary matrices with $\operatorname{determinant}$
equal to one, and $e_{\mu}$ denotes the canoncical unit vector in the
$\mu$-direction, i.e., $z+e_{\mu}$ describes a shift from grid point
$z$ to its neighbor in the $\mu$-direction.
The unknown field, $\psi$, is defined at the grid points $z\in
\Omega$ with $N_{s}\cdot N_{c}$ variables per grid point. 
The discrete gauge fields $U_{\mu}^{z}$ are defined on the edges of the grid, 
as illustrated in Figure~\ref{fig:boot:gaugeconfig} in a 2-dimensional
setting.  The set $\{U_{\mu}^{z} \in SU(N_{c}), \mu = 1,\ldots,d, z\in \Omega\}$ of
discrete gauge fields $U_{\mu}^{z}$ is referred to as a gauge
configuration. For
a more detailed introduction to QCD and lattice QCD we refer 
to~\cite{DeGrand:2006zz,Gattringer:2010zz,montvay1994quantum}.

In the 2-dimensional setting that we consider in this paper, $N_s = 2$ and $N_c = 1$ so that the $\gamma_\mu$-matrices are
given by
\begin{eqnarray*}\label{eq:gamma}
 \gamma_1 = \begin{pmatrix}
  0    & 1   \\
   1   & 0 
\end{pmatrix}
\quad \mbox{and} \quad 
 \gamma_2 = \begin{pmatrix}
  0    & i   \\
   -i   & 0 
\end{pmatrix},
\end{eqnarray*}
and the fields $U_z^\mu$ which are defined on the edges of the grid  
belong to the $U(1)$ group, i.e., they are complex numbers with
modulus one.
In order to give an explicit expression for the action of $D$ on a field $\psi
\in \linspace[2N^2]{C}$ we consider a spin-permuted reordering. That
is, we write $\psi = (\psi_{1}^{T},\psi_2^{T})^{T}$ where $\psi_{1}$ represents
the variables with spin 1 at all grid points and $\psi_{2}$
represents the variables with spin 2. Ordering $D$ accordingly it
has the structure
\begin{equation}\label{eq:bst}
  D = \frac12\left(\begin{matrix} A & B \\ -B^{H} & A 
  \end{matrix}\right).
\end{equation} Here, the diagonal blocks, $A$, are referred to as
\textit{Gauge Laplacians}. They were introduced
originally by Wilson as a way to stabilize a covariant finite difference discretization of the Dirac
equation~\cite{PhysRevD.10.2445}.  
The action of $A \in \mathbb{C}^{N^2\times N^2}$ on a vector $\phi \in \linspace[{N^{2}}]{C}$ at a grid point $z \in \Omega$ reads
\begin{equation*}\label{eq:gauge_Laplace:def}
\left(A\phi\right)_{z} = (4+2m)\phi_{z} -  U^{z-e_{x}}_x\phi_{z-e_{x}} - U^{z-e_{y}}_{y}\phi_{z-e_{y}} -\overline{U}^{z}_{x}\phi_{z+e_{x}} -\overline{U}^{z}_{y}\phi_{z+e_{y}}, 
\end{equation*}
and the action of $B \in \mathbb{C}^{N^2\times N^2}$ on $\phi\in \linspace[{N^{2}}]{C}$ at site $z \in \Omega$ is given by
\begin{equation*}
  \left(B\phi\right)_{z} = \overline{U}^{z}_{x}\phi_{z+e_{x}} - U^{z-e_{x}}_{x}\phi_{z-e_{x}} + i\left(\overline{U}^{z}_{y}\phi_{z+e_{y}} -  U^{z-e_{y}}_{y}\phi_{z-e_{y}}\right).
\end{equation*} 
We note that if $U_\mu^z = 1$ for $\mu = x,y$ and all $z \in \Omega$, then $A$ is the standard 5-point Laplacian (plus a diagonal shift) and $B$ is a central difference approximation to the gradient of $\phi$.  

The gauge field configurations used in different calculations 
throughout the paper are
generated using a standard Metropolis algorithm
with a quenched Wilson gauge field action
\begin{equation}\label{eq:wilsongauge}
S = \sum_{z\in\Omega} \beta \mbox{Re} 
(   \overline{U}_y^{z} \overline{U}^{z+e_y}_{x} U^{z+e_x}_{y}  U_x^{z}).
\end{equation}
For details on the Metropolis algorithm and its implementation in this setting 
we refer to~\cite{DelDebbio:2005qa}.
In general, the distribution of the gauge fields depends on the parameter $\beta$
in \eqref{eq:wilsongauge}. The case $\beta \rightarrow \infty$ yields
$U_{\mu}^{z} \rightarrow 1$ for $\mu = x,y$ and all $z \in \Omega$. 
As $\beta \rightarrow 0$, the phases $\theta_{\mu}^{z}$ in
$U_{\mu}^{z} = e^{i\theta_{\mu}^{z}}$ become less correlated and the gauge fields 
become highly disordered, causing local oscillations in the near kernel components
of the Wilson matrix.  
In our tests, we consider three values of $\beta=3,6,10$ and nine configurations of the gauge fields
for each value, corresponding to steps 11,000, 12,000, ..., 19,000
of a standard Metropolis algorithm using the action given in~\eqref{eq:wilsongauge}.
We note that the same configurations are reused in all the tests.

\subsection{Singular vectors of the Wilson matrix}\label{Wilson:specs}
One difficulty that arises when designing multigrid solvers for the
Wilson discretization of the Dirac equation is that the support and local structure of the near kernel components of the Wilson matrix 
depend on the local values of the gauge fields.  
As an example, plots of the modulus 
of the individual spin components of the right
singular vectors to small singular
values of $D$ for $N=128$, $\beta=6$ at $\eta = 10^{-7}$ are provided in
Figures~\ref{fig:nk1},~\ref{fig:nk10} and~\ref{fig:nk20}. 
In this case, which is representative of what happens for physically relevant configurations, 
the singular vectors belonging to small singular values 
of the Wilson matrix are locally non-smooth. It is this feature 
that motivates the use of adaptive AMG techniques for the Wilson
matrix. 

In addition, we observe that some of the singular vectors to small singular
values are localized, e.g., the smallest in Figure~\ref{fig:nk1} and
$2$nd smallest in Figure~\ref{fig:nk10}. This is in contrast to the
singular vector belonging to the $10$th smallest singular value shown
in Figure~\ref{fig:nk20}. These findings of non-smooth, localized and
non-localized near kernel vectors is indicative of 
what occurs in practice. 
Related numerical studies on the local supports of the eigenvectors of the Wilson matrix are found in~\cite{local}.
Assuming a point-wise smoother, the coarse space basis used in the associated AMG solver 
for this problem must be able to approximate a large number of (possibly localized) near kernel vectors, 
which motivates the use of least squares interpolation as a technique for accurately approximating sets of such 
test vectors collectively.

\noindent\begin{minipage}[t]{\textwidth}
\begin{minipage}[t]{.3\textwidth}
\begin{figure}[H]
  \subfigure[First spin
  component.]{\includegraphics[width=\textwidth]{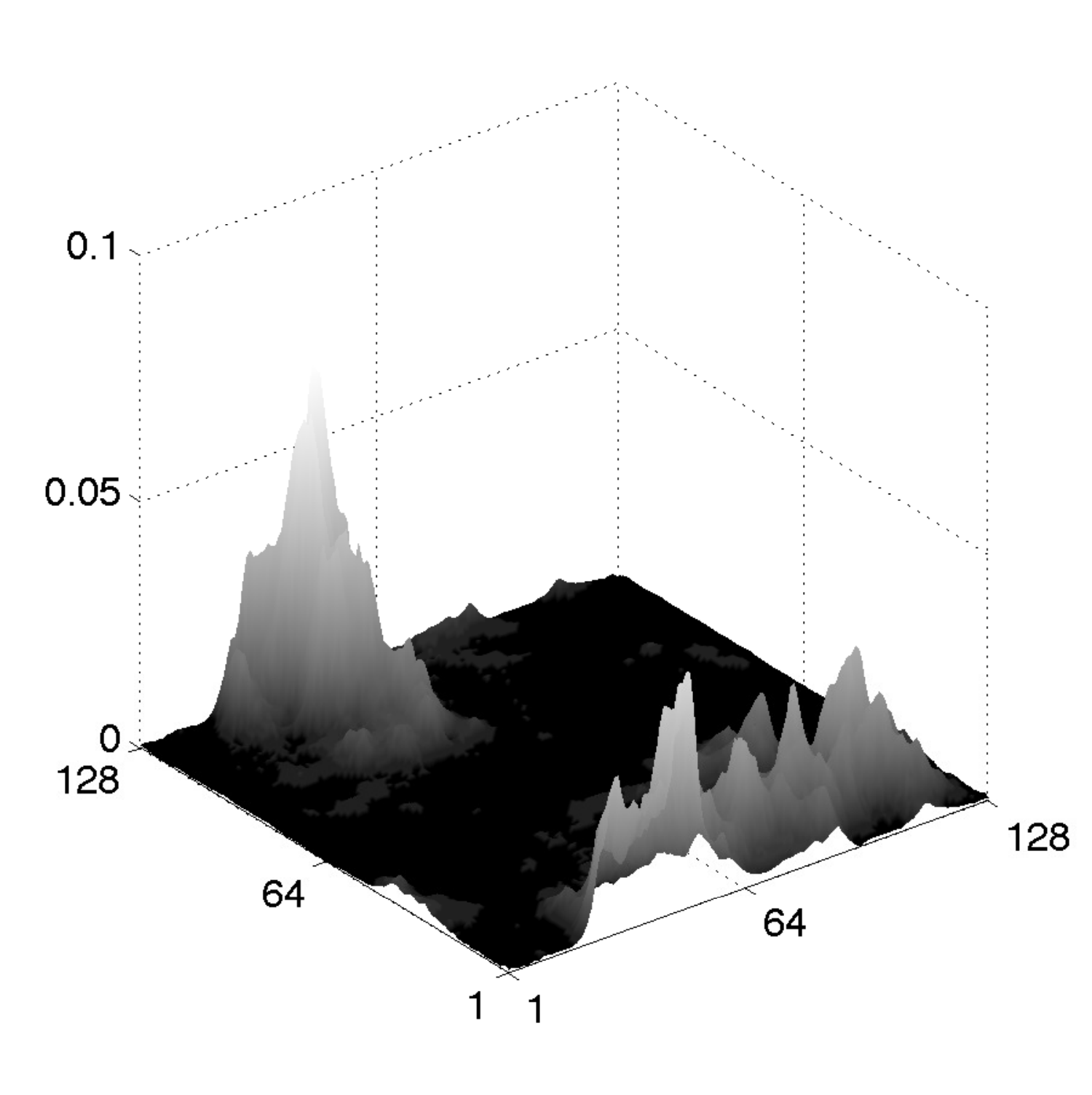}}
  \subfigure[Second spin
  component.]{\includegraphics[width=\textwidth]{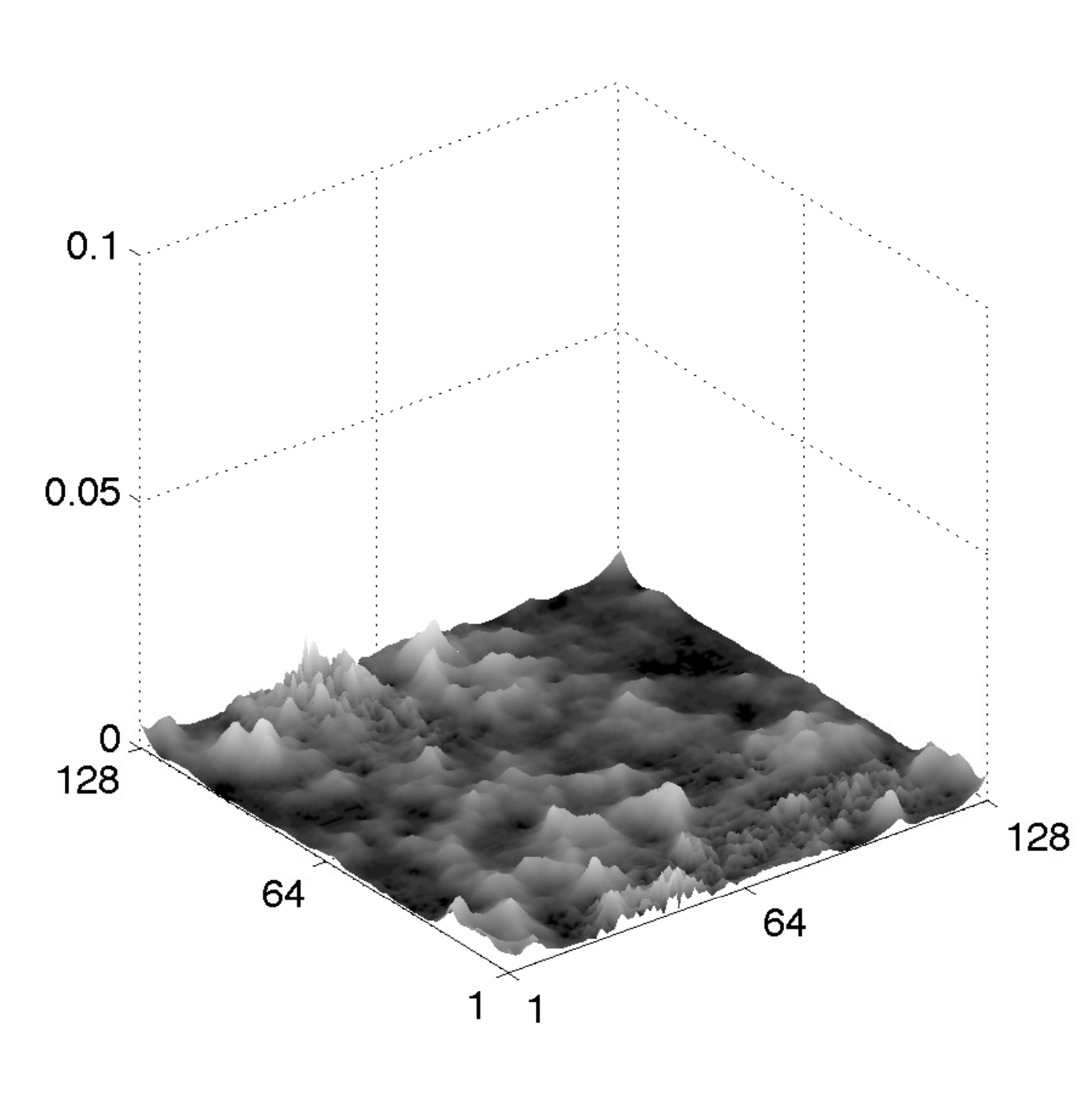}}
\caption{Modulus of the spin components of the right singular vector to the smallest
  singular value $\sigma_{1} = 8.49\cdot 10^{-8}$.\label{fig:nk1}}
\end{figure}
\end{minipage}
\hfill \vline \hfill
\begin{minipage}[t]{.3\textwidth}
\begin{figure}[H]
\subfigure[First spin component.]{\includegraphics[width=\textwidth]{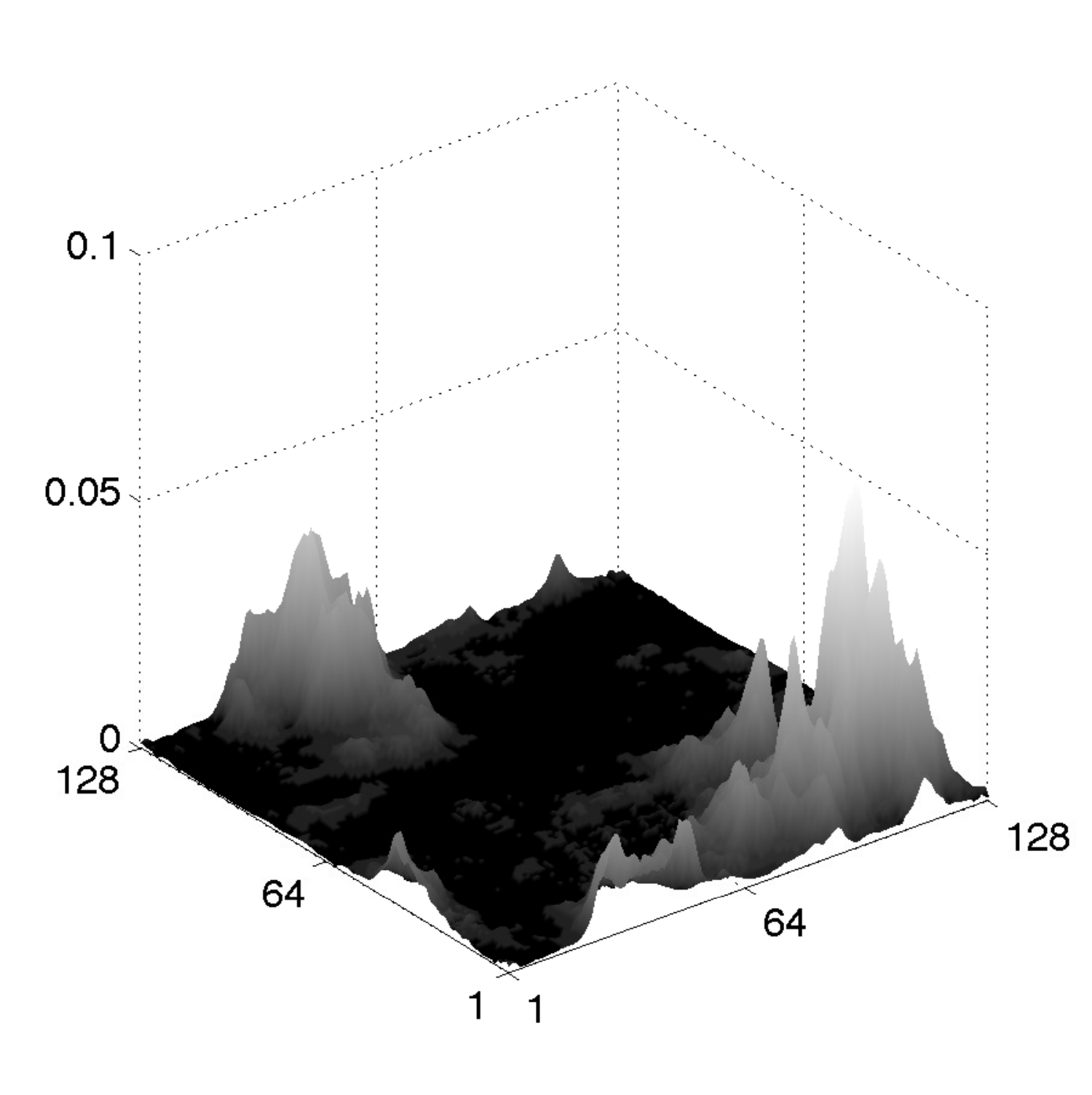}}
  \subfigure[Second spin
  component.]{\includegraphics[width=\textwidth]{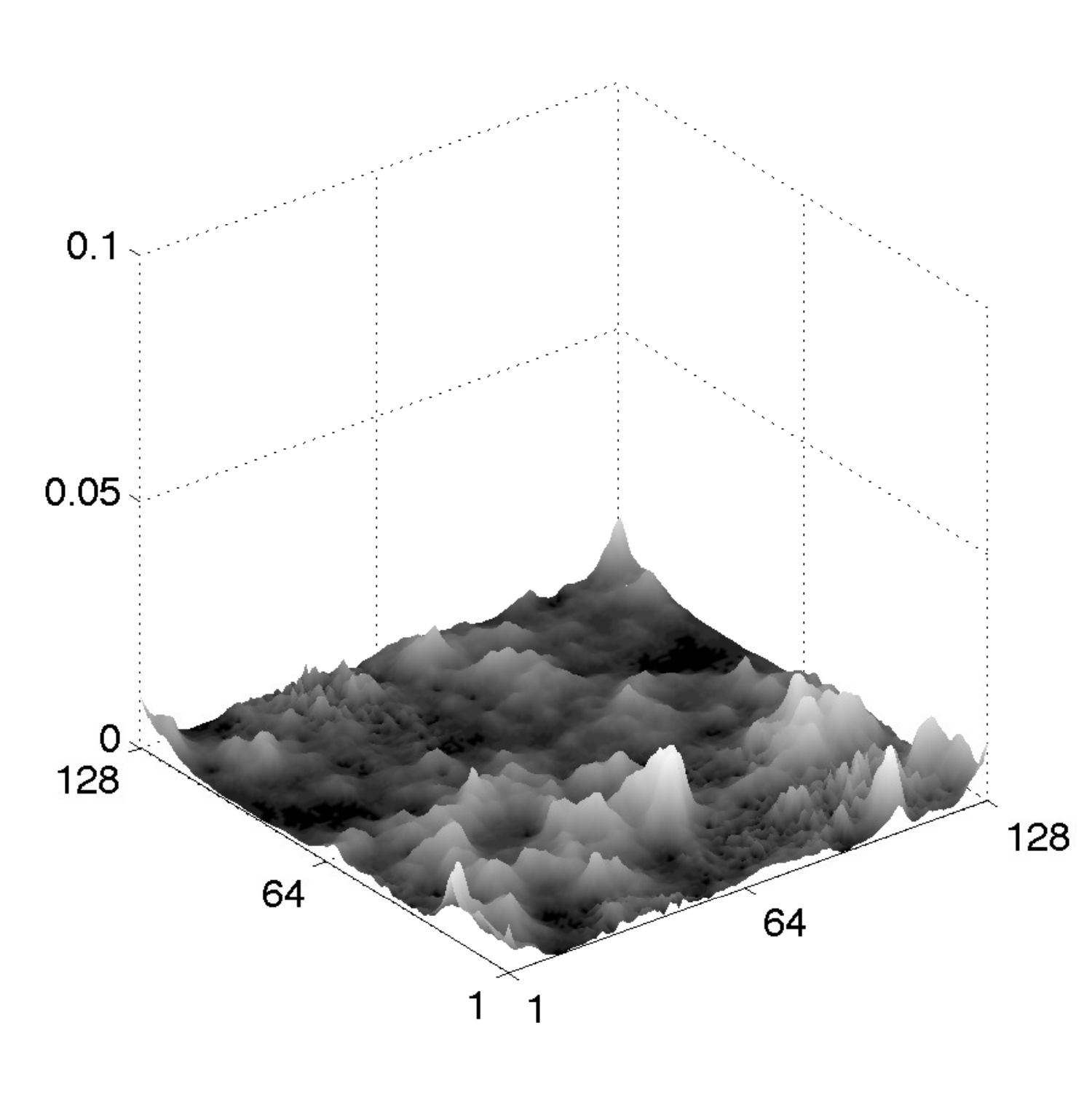}}
\caption{Modulus of the spin components of the right
  singular vector to the $2$nd smallest
  singular value $\sigma_{2} = 1.65\cdot 10^{-3}$.\label{fig:nk10}}
\end{figure}
\end{minipage}
\hfill \vline \hfill
\begin{minipage}[t]{.3\textwidth}
\begin{figure}[H]
  \subfigure[First spin component.]{\includegraphics[width=\textwidth]{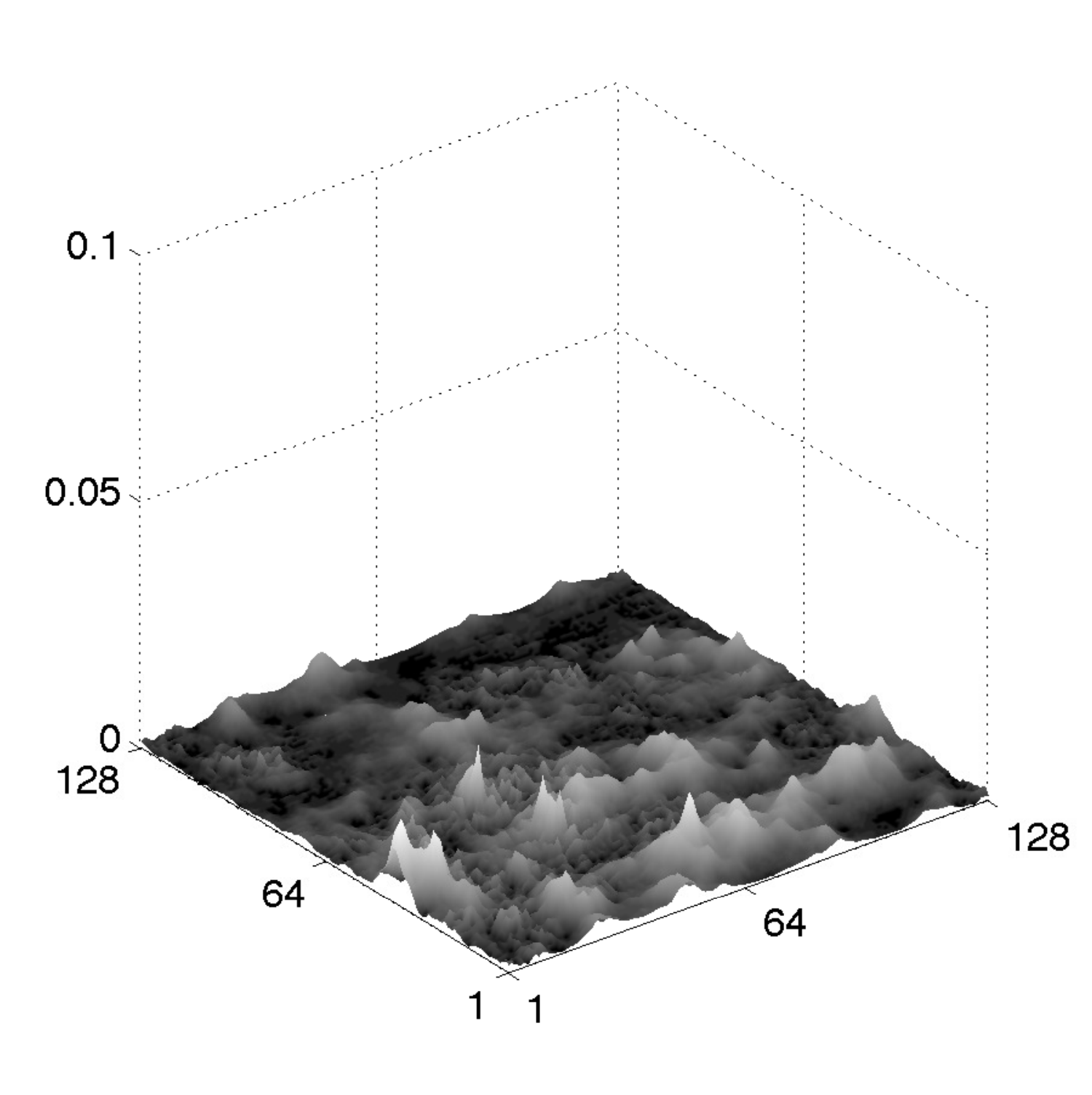}}
  \subfigure[Second spin component.]{\includegraphics[width=\textwidth]{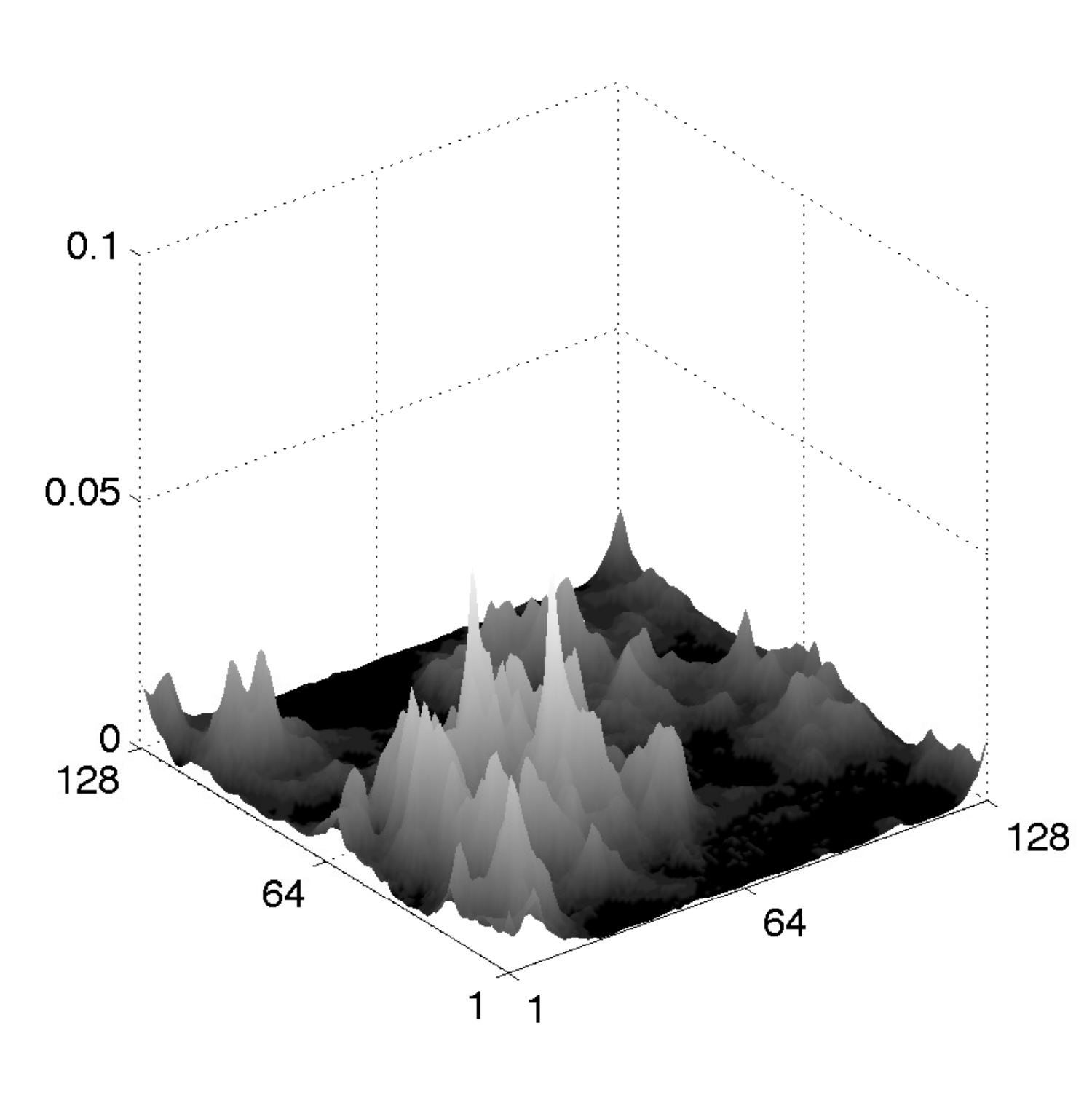}}
\caption{Modulus of the spin components of the right
  singular vector to the $10$th smallest
  singular value $\sigma_{10} = 2.09\cdot 10^{-2}$.\label{fig:nk20}}
\end{figure}
\end{minipage}
\end{minipage}\bigskip

\subsection{Spectrum of the Wilson matrix}\label{Wilson:spec}
Representing the near kernel vectors of the Wilson matrix $D$ in the
coarse space is further complicated by the fact that in practice 
 the shift $m$ is chosen such that 
\begin{equation}\label{eq:etamin}
\eta_{\min} (D) = \min_{\lambda \in \operatorname{spec}(D)} \mbox{Re} (\lambda)
\in \linspace{R},
\end{equation}
is positive and close to zero, with $\operatorname{spec}(D)$ denoting the spectrum of $D$.
The spectra of the mass-less Wilson matrix $D_0$, i.e.,
\eqref{eq:wilson} with $m_{q} = 2(d-1)$, for $n=32$ with $\beta = 3$ and $\beta =6$ are provided in 
Figure~\ref{fig:specD0b3}, and 
Figure~\ref{fig:eigsD0} contains plots of the 16 smallest
eigenvalues of $D_0$ for $N=128$ and $\beta = 6$ for nine distinct gauge
field configurations.  We note that in all cases 
the eigenvalues of $D_0$ have a positive real part, which holds for all of the problems considered
in this paper.  Additionally, as
the plots in Figure~\ref{fig:specD0b3} illustrate, when the value of $\beta$ decreases the
eigenvalue with minimal real part moves away from the origin and the
eigenvalue with maximal real part moves closer to the origin.

The $\gamma_5$-symmetry of the Wilson matrix implies
that the eigenvalues of $D$ are either real or appear in complex conjugate pairs.
Specifically,  define $\gamma_5 =  \operatorname{diag}(1,-1)$ and set 
$\Gamma_5 = I_{N^2} \otimes \gamma_5$.  Then, $\Gamma_5^H\Gamma_5 =
\Gamma_5^2=  I$ and $$\Gamma_5 D = D^H \Gamma_5 \quad \mbox{or} \quad
D = \Gamma_5 D^H \Gamma_5.$$  Now, 
if $v_\lambda$ denotes a right eigenvector of $D$ to the eigenvalue $\lambda \neq \bar{\lambda}$, we see that
$\Gamma_5 v_\lambda$ is a left eigenvector to the eigenvalue
$\bar{\lambda}$, i.e.,
\begin{equation}\label{eq:rightevleftev}
  D v_\lambda = \lambda v_\lambda \iff ( \Gamma_5 v_\lambda)^H D  = \bar{\lambda} (\Gamma_5 v_\lambda)^H,
\end{equation} Thus, to each
right eigenpair $(\lambda,v_\lambda)$ there corresponds a left eigenpair 
$(\bar{\lambda},\Gamma_5 v_\lambda)$, and the spectrum of $D$ is
symmetric with respect to the real axis. 
More generally, since $\Gamma_{5}$ is unitary, we have $\norm{\Gamma_{5}x} = \norm{x}$
for any $x$, and the $\gamma_{5}$-symmetry yields, in addition,
that $\norm{Dx} = \norm{D^{H}\Gamma_{5} x}$. Thus for a general near
kernel component, $x\in \linspace[n]{C}$, we find that 
$$\frac{\|Dx\|}{\|x\|} \approx 0 \iff \frac{\|D^H\Gamma_5
  x\|}{\|\Gamma_{5} x\|} \approx 0.$$   

Overall, as the plots in Figures~\ref{fig:specD0b3} and~\ref{fig:eigsD0} illustrate, 
depending on the choice of the shift $m$, the resulting system matrix can have a large number of
eigenvalues that are close to zero and, thus, potentially a large
number of small singular values.  This observation motivates the use of the 
multigrid eigensolver as an approach to efficiently compute
several near kernel components simultaneaously 
in the proposed bootstrap AMG setup algorithm.
\begin{figure}
  \begin{minipage}[t]{.5\textwidth}
    \begin{center}
    \subfigure[Spectrum of $D_0$ for $\beta
    =3$]{\includegraphics[width=.75\textwidth]{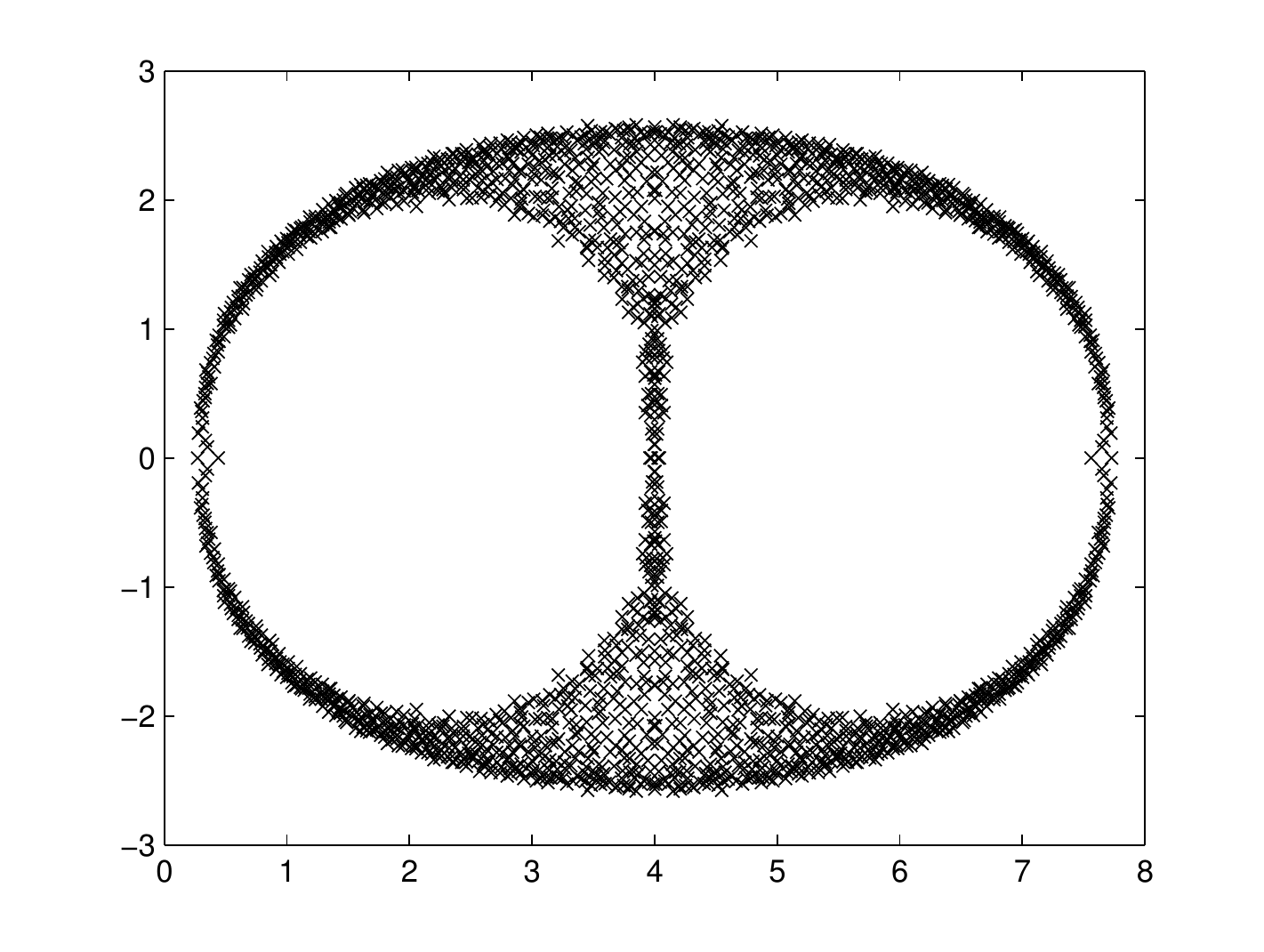}}
  \end{center}
  \end{minipage}\hfill
  \begin{minipage}[t]{.5\textwidth}
    \begin{center}
    \subfigure[Spectrum of $D_0$ for
    $\beta=6$]{\includegraphics[width=.75\textwidth]{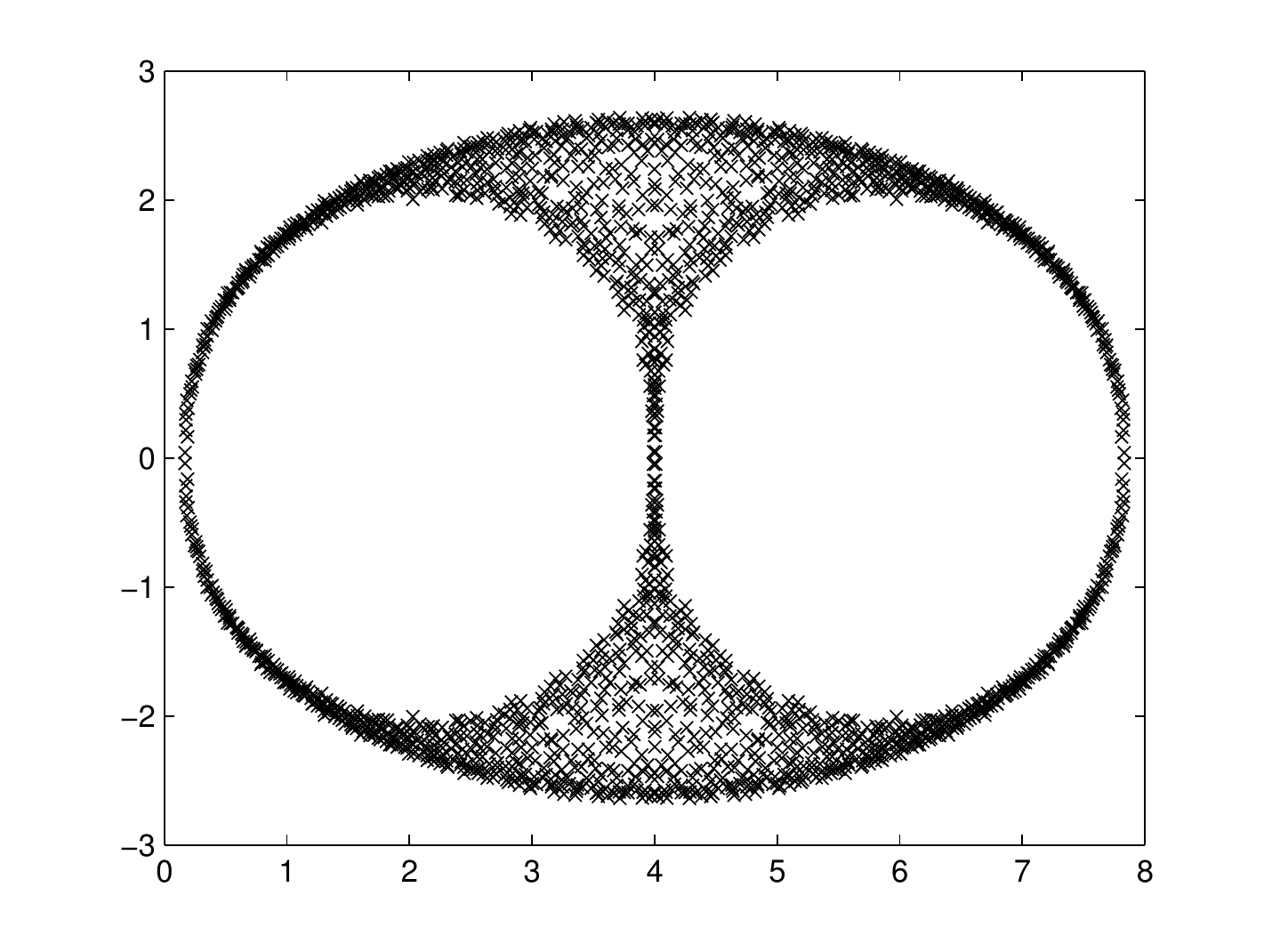}}
    \end{center}
    \end{minipage}
    \caption{Spectra of the mass-less Wilson matrix $D_0$ for $N=32$ and
      $\beta = 3, 6$, configuration 13,000.\label{fig:specD0b3}}
\end{figure}

\begin{figure}
\centerline{
  \includegraphics[width=.85\textwidth]{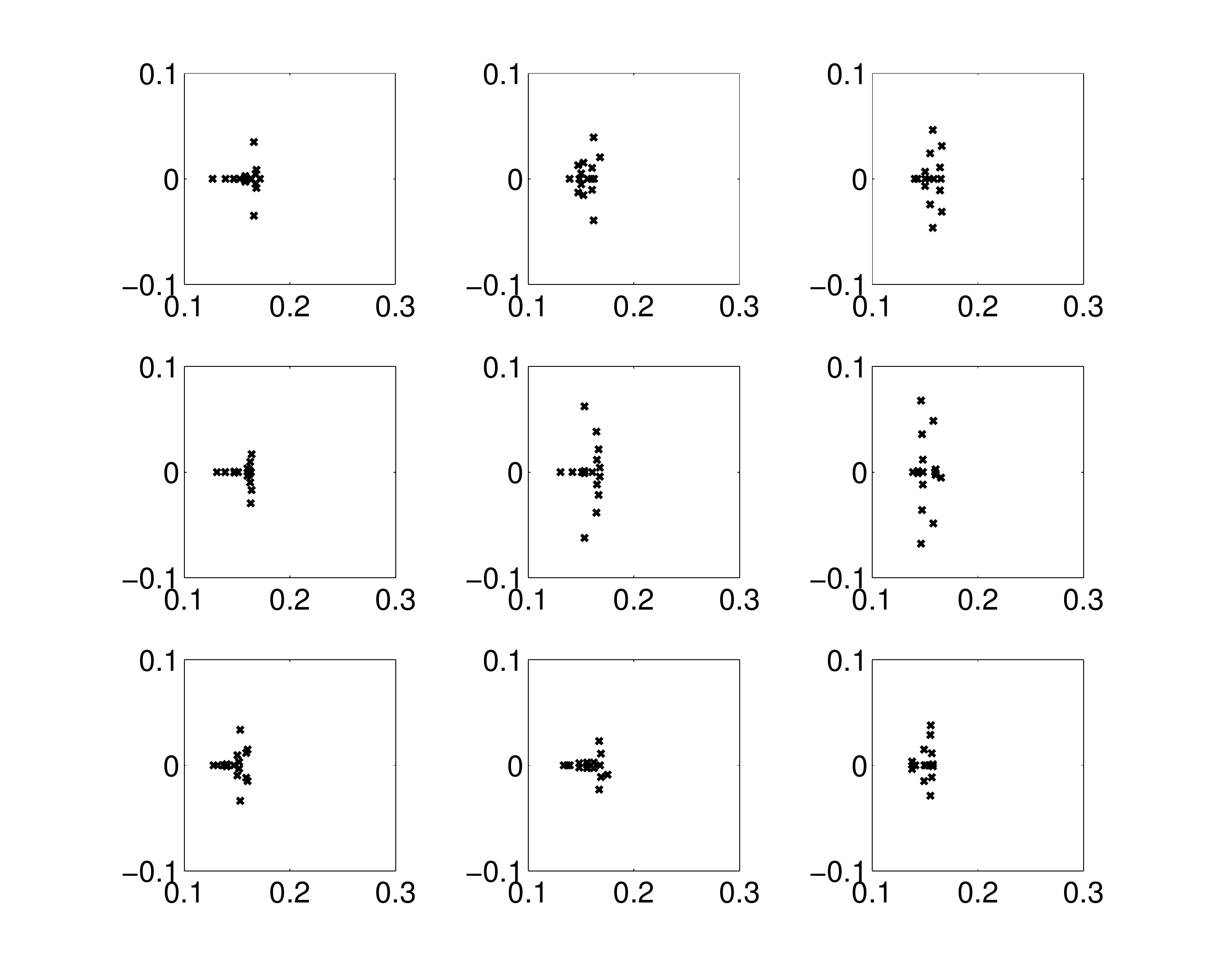}
}
\caption{Smallest sixteen eigenvalues of the Wilson matrix $D_0$ for $N=128$ and $\beta
  = 6$. The plots from left to
right and then top to bottom correspond to the nine gauge field configurations 
11,000, 12,000, ..., 19,000.}\label{fig:eigsD0}
\end{figure}

\subsection{Failure of Krylov methods for the Wilson matrix}\label{Wilson:krylov}

Typically, a standard Krylov method (e.g., BiCG, GMRES, CGNR) is used 
to solve the linear systems~\eqref{eq:dirac} arising throughout a lattice QCD simulation.
The large condition number of the Wilson matrices that result from
physically relevant choices
of the shift $m$ lead to slow convergence of these methods, as shown in the plots
on the left in Figure~\ref{CGNor}, where we report results of CGNR and
restarted GMRES  with a restart value of $32$ (GMRES($32$))
applied to a series of linear systems involving 2-dimensional Wilson matrices.  
For both methods, we see that the solver requires a large number of
iterations to drive the residual down to the given tolerance and that
GMRES($32$) reaches the maximum number of iterations before reaching
the convergence criteria in many cases.  Here, the maximum number of
iterations is limited to 4096 and the solver stops if it reaches the
prescribed tolerance of $10^{-8}$ reduction in the relative residual
or this number of iterations.

Moreover, as illustrated in the plots on the right in
Figure~\ref{CGNor}, even when the algorithm stops successfully, the actual error is large compared to the
residual.  In fact, we observe that the relative $\ell_2$ norm of the
error is up to six orders of magnitude larger than the relative $\ell_2$ norm of the final residual.  
Of course, decreasing the tolerance for the norm of the residual for either method should further reduce the error, but would result in an even larger number of iterations.
Overall, these results are representative of the performance of
standard Krylov methods applied to the Wilson matrix.  
We mention in addition that although $m$ is set so that
$\eta_{\min} (D) > 0$, $\lambda_{\min} (A)$, the smallest
eigenvalue of the gauge Laplacian block from~\eqref{eq:bst}, can become 
negative which complicates the use of block preconditioners (e.g.,
Uzawa type schemes) for the solution of Wilson matrices for physically
relevant choices of $m$.
\begin{figure}
  \begin{minipage}{.5\textwidth}
    \begin{center}
      \subfigure[CGNR results for Wilson matrices.]{\includegraphics[width=\textwidth]{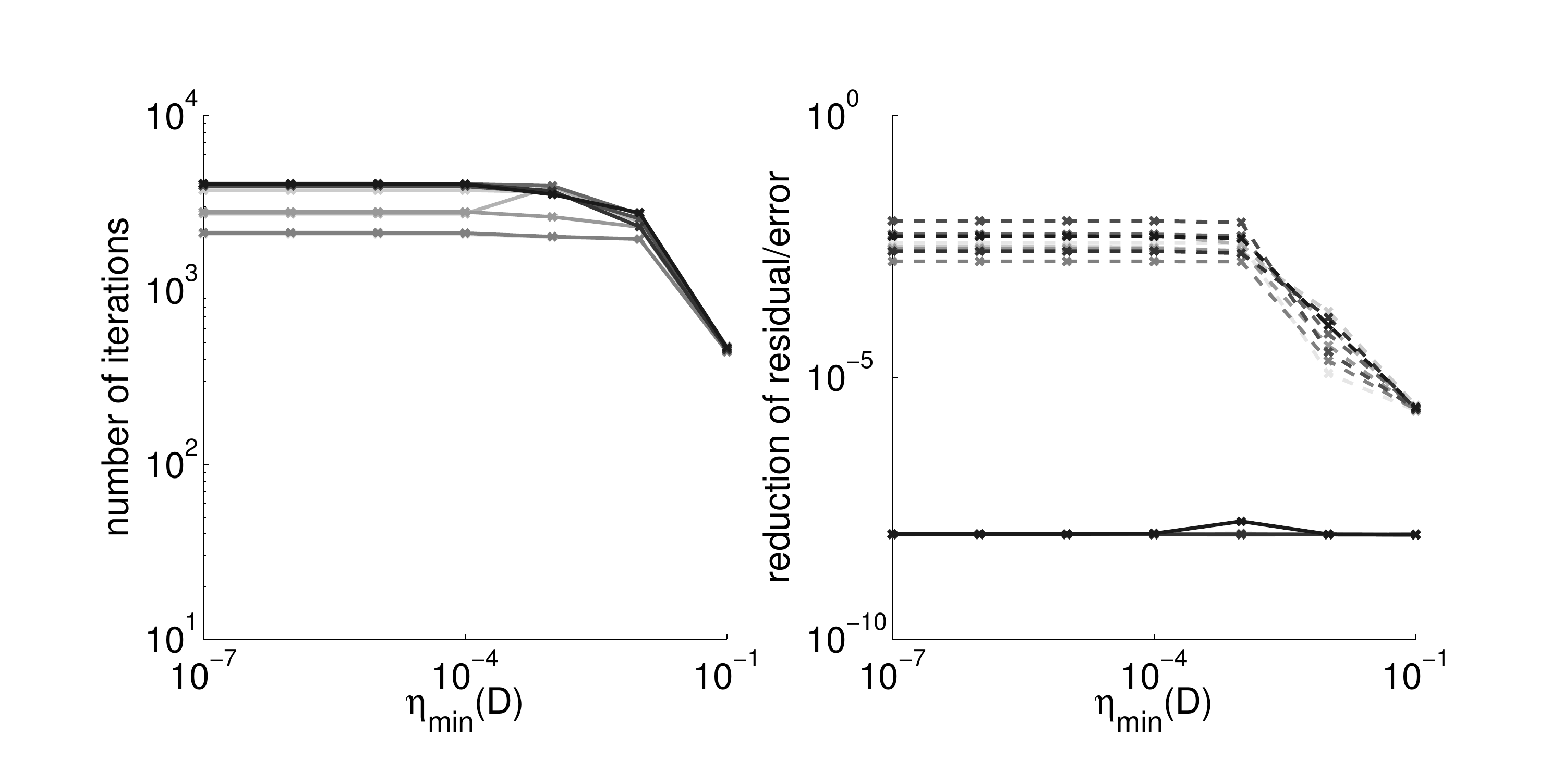}}
    \end{center}    
  \end{minipage}\hfill
  \begin{minipage}{.5\textwidth}
    \begin{center}
            \subfigure[GMRES($32$) results for Wilson matrices.]{\includegraphics[width=\textwidth]{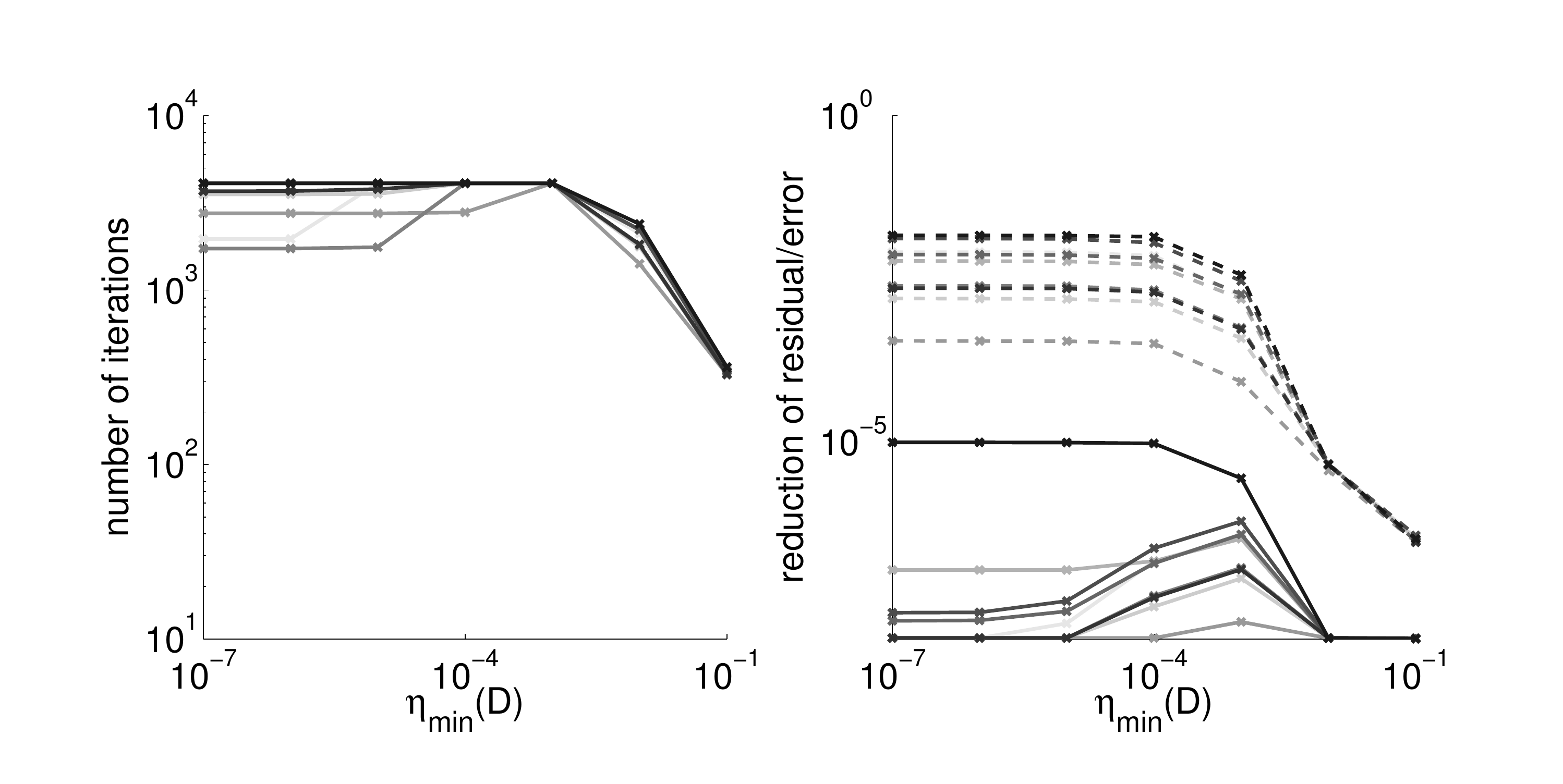}}
         \end{center}    
  \end{minipage}  
  \caption{Results of CGNR and GMRES($32$) applied to Wilson matrices
    for $N=128$ and $\beta = 6$. The results for the 
    light to dark lines correspond to
    different gauge field configurations, going from light to dark with increasing configuration number. 
    On the left of each subplot, the number of iterations needed to reduce the $\ell_2$ norm of the relative residual by a factor 
    $10^{-8}$ is plotted against different values of $\eta_{\min} (D)$
    defined in~\eqref{eq:etamin}, corresponding to different choices of the
    diagonal shift $m$. On the right of each subplot, the resulting
    relative residuals (solid lines) and relative errors (dashed lines) are plotted against $\eta_{\min} (D)$.}\label{CGNor}
\end{figure}

\section{Bootstrap geometric-AMG for the Wilson matrix}\label{BAMG}
In this section, we develop a bootstrap approach for linear systems
with the non-Her\-mi\-tian Wilson matrix and study it in detail.  The
algorithm we consider here combines a bootstrap setup to compute the
test vectors used in defining interpolation with an adaptive step that
applies the existing solver to an appropriate initial guess to update
the test vector(s).  We note that all arguments made in this section 
also carry over to other discretizations of the Dirac equation as long as they satisfy
the $\gamma_5$-symmetry. 

\subsection{Kaczmarz relaxation} 
As the smoother in the proposed BAMG algorithm we consider Kaczmarz relaxation.  
For the linear system, $D \psi =b$, with the non-Hermitian Wilson
matrix $D$ the  
Kaczmarz iteration is based on the equivalent formulation involving the normal equations
\begin{equation}\label{eq:ne}D^H D \psi =D^Hb.
\end{equation}
Given an approximation to the solution, $\psi$, of \eqref{eq:ne}, one iteration of the basic Kacmarz iteration for this formulation reads: 
\begin{equation*}\label{eq:nrgs}
\psi \leftarrow \psi + s_i e_i, \quad i = 1, ..., n,
\end{equation*}
where $e_i$ is the $i$-th Euclidian basis vector and $s_i$ is chosen so that the corresponding component of the 
residual vanishes:
$$ \langle D^H b - D^H D(\psi + s_i e_i), e_i \rangle = 0.$$  Now, setting $ r = b - D \psi$ gives
$$ \langle D^H (r + s_i De_i), e_i \rangle = 0, \quad \mbox{implying} \quad
 \displaystyle{s_i = \frac{\langle r,D e_i \rangle}{\|D e_i\|_2^2}}.$$
Thus in practice Kaczmarz relaxation can be realized using column
access to the entries of $D$ only 
and the arithmetic complexity of a single iteration depends only on the number of non-zero entries in $D$. 

Local mode analysis for Kaczmarz relaxation suggests that it is widely
applicable as a smoother, although it is often less efficient with
respect to the actual smoothing rate (see~\cite[Section
4.7]{Trottenberg:2000:MUL:374106}).  Further analysis of the smoothing
properties of the Kaczmarz iteration applied to general rectangular
systems is found in~\cite{IOPORT.05602071}.  While local mode analysis
is not applicable to the Wilson matrix due to the local variations in
the gauge fields and, hence, its off-diagonal entries, 
the numerical results we provide for the proposed multigrid solver
with Kaczmarz smoother suggest its robustness for the Wilson discretization of the Dirac
equation.  
We address its suitability in the bootstrap AMG setup process for computing singular value triplets in the subsequent section.

We note that the Kaczmarz iteration as defined above updates the
unknowns in sequential order 
$i = 1,\ldots,N^d$, referred to as a {\em forward sweep}.  Thus, 
as is, the method is not amenable to parallel computing.  
Formulating a parallel version using an
appropriate {\em coloring} strategy to order the updates 
of the unknowns is, however, straightforward since Wilson's
discretization of the Dirac equation is formulated on a regular grid
with nearest-neighbor coupling. 

\subsection{Geometric coarsening}\label{sec:coarsening}
\begin{figure}
  \begin{minipage}[t]{.225\textwidth}
    \subfigure[Odd-even coarsening of the grid]{\includegraphics[width=\textwidth]{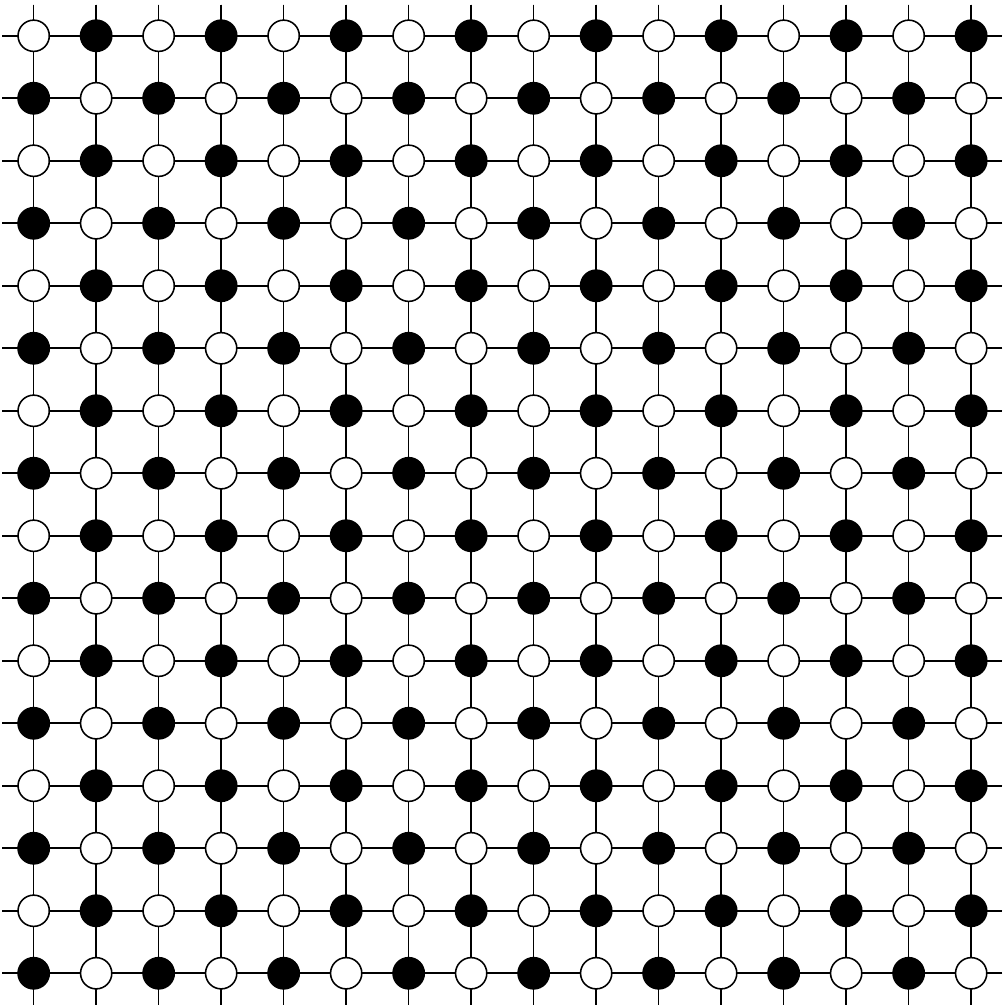}}
  \end{minipage}\hfill
  \begin{minipage}[t]{.225\textwidth}
  \subfigure[Structure of the odd-even reduced operator]{\includegraphics[width=\textwidth]{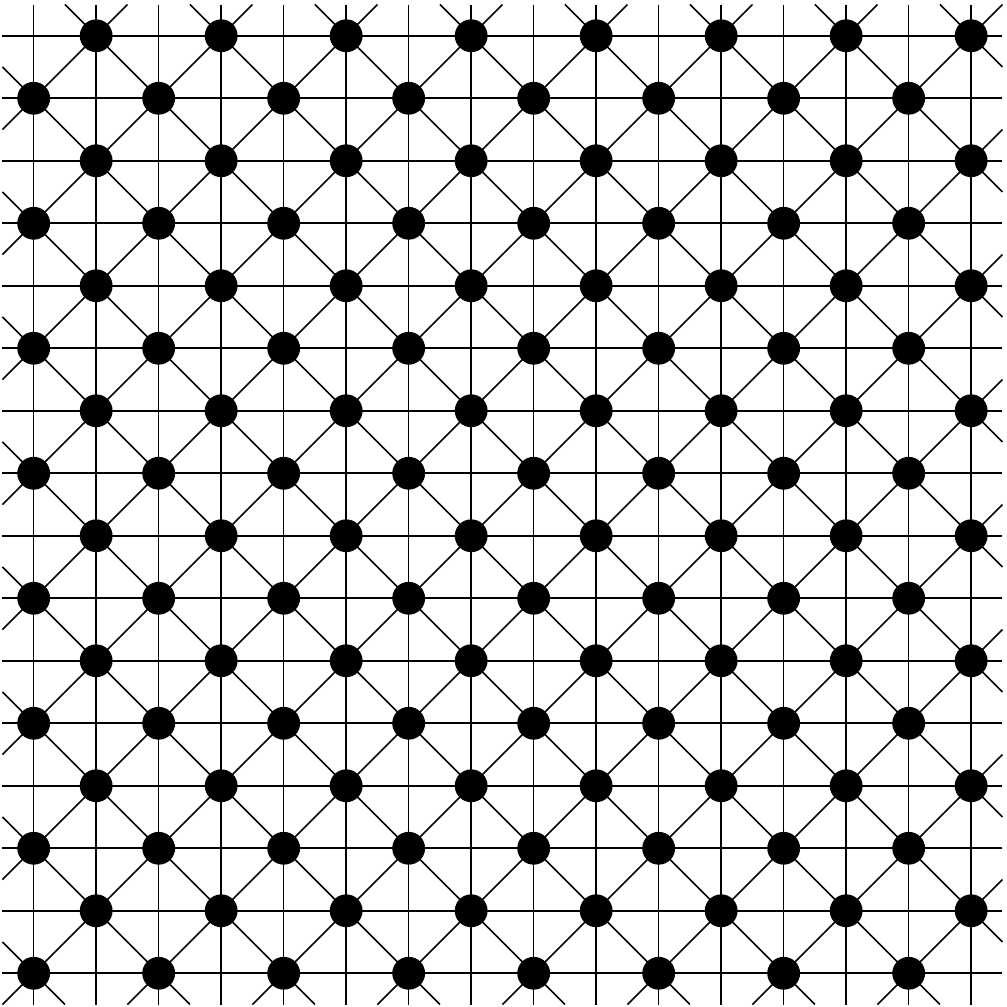}}
  \end{minipage}\hfill
  \begin{minipage}[t]{.225\textwidth}
   \subfigure[Full coarsening of the ``even'' grid]{\includegraphics[width=\textwidth]{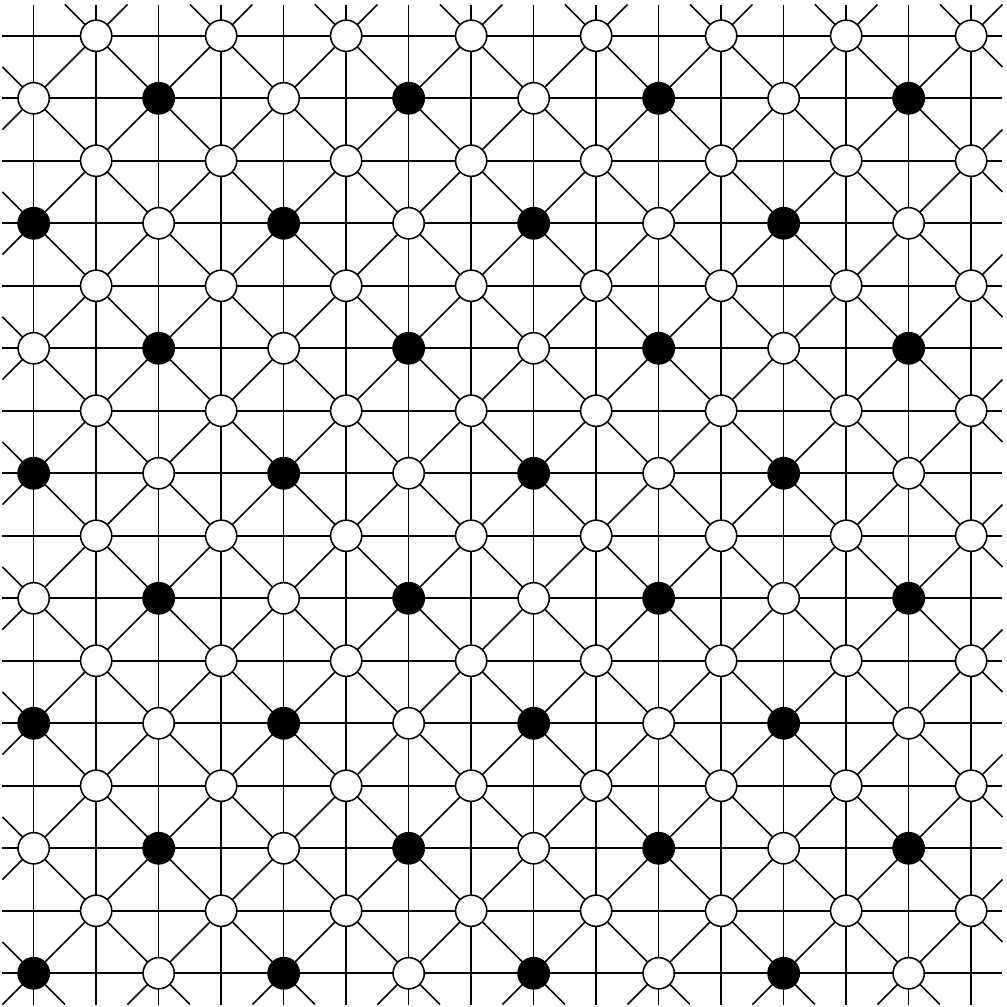}}
  \end{minipage}\hfill
  \begin{minipage}[t]{.225\textwidth}
  \subfigure[Structure of the next coarser grid assuming next-neighbor
  interpolation]{\includegraphics[width=\textwidth]{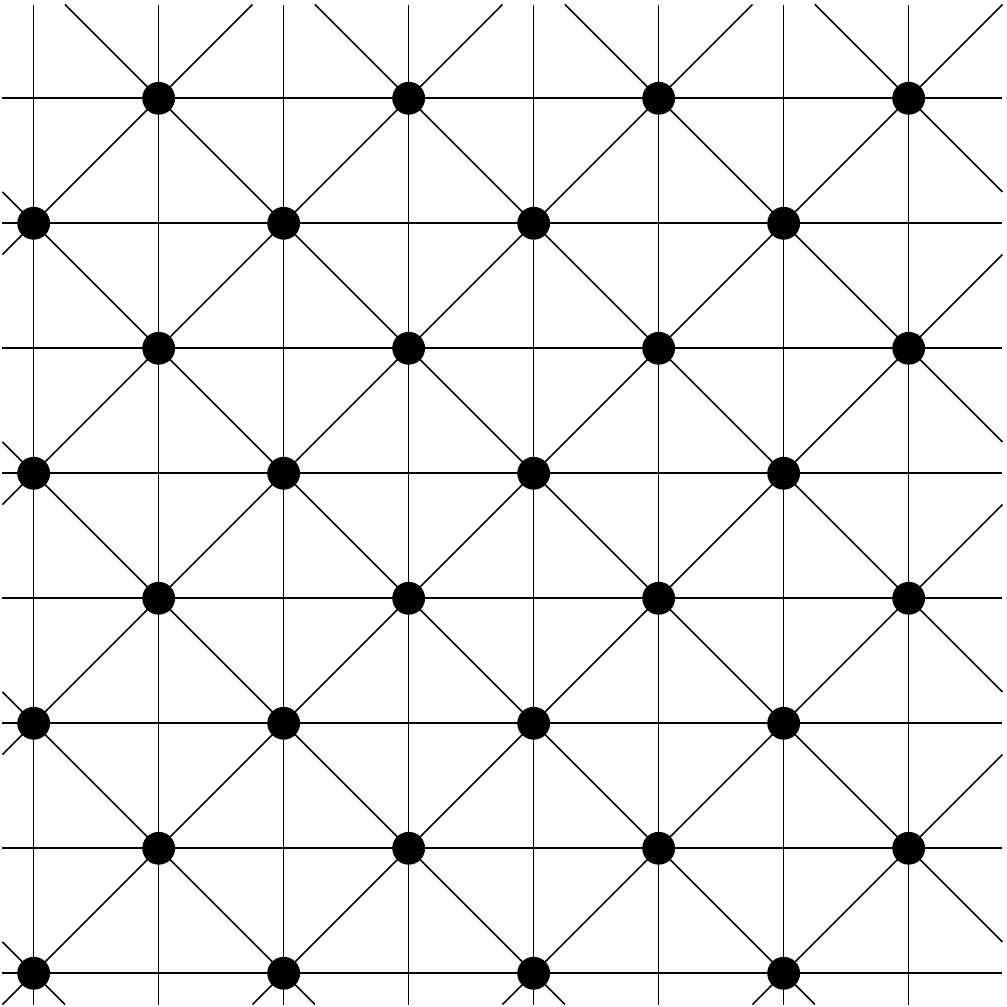}}
  \end{minipage}
  \caption{The odd-even reduction and full coarsening of the grid of even points. The circles denote grid points
    and the edges connections among them, defined according to the
    graph of $D$ and the Schur complement $\widehat{D}$
    resulting from odd-even reduction. In~(a) and~(c) white points
    correspond to coarse points.   \label{fig:oddevencoarsening}}
\end{figure} 
The nearest-neighbor finite difference scheme and regular cubic grid
used in the Wilson discretization of the Dirac equation
allows for an odd-even reduction (or coarsening), which is typically applied when developing solvers for this 
system~\cite{Osborne_POS_2009,MILC}. 
Let a grid point $(x,y)\in \{1,\ldots,N\}^2$ be labeled as \textit{even} if $x+y$ is even and
as \textit{odd} otherwise (see~Fig.~\ref{fig:oddevencoarsening}). In case the total number of grid points, $n$, is
even, then the number of odd and even points is exactly $n/2$. Any
vector $\psi \in \linspace[n]{C}$ can be written as
$ \psi =  \left(\begin{matrix} \psi_{e}^T, \psi_{o}^T\end{matrix}\right)^T$ by numbering the even points before the odd
ones. Using the same numbering scheme for the rows and columns of the Wilson matrix gives
\begin{equation*}
  D = \left(
    \begin{matrix}
      D_{ee} & D_{eo} \\ D_{oe} & D_{oo}      
    \end{matrix}
  \right).
\end{equation*}
Now, since $D$ couples only nearest-neighbors on the grid
(see~Fig.~\ref{fig:oddevencoarsening}~(a)), the blocks
$D_{ee}$ and $D_{oo}$ are diagonal. Specifically, 
$D_{ee} = D_{oo} = c\cdot I$ for $c\in \linspace{R}$ and upon scaling by $c^{-1}$
the constant can be set as $c=1$. Define the operator 
\begin{equation}\label{eq:wilsonSchur}
\widehat{D} = I -
D_{eo}D_{oe},
\end{equation}
i.e., $\widehat{D}$ is the 
the Schur complement of $D$ with respect to 
the even points,
referred to as the odd-even reduced matrix. 
With it the solution of the linear system
$
  D\psi = b
$ can be calculated in the following two steps. \medskip
\begin{enumerate}
\item Solve $\widehat{D}\psi_{e} = b_{e} - D_{eo}b_{o}$.
\item Compute $\psi_{o} = \psi_{o} - D_{oe}x_{e}$.
\end{enumerate}\medskip

From the form of the Schur complement
in~\eqref{eq:wilsonSchur}, we see that a matrix vector 
multiplication with $\widehat{D}$ requires
roughly the same number of floating point operations as the
multiplication with $D$. 
However, solving systems with $\widehat{D}$, instead of the original
matrix $D$, typically reduces the total number of CGNR iterations
by a factor of two. This is illustrated in Figure~\ref{CGNodd},
where results of CGNR applied to the system with $\widehat{D}$ for
$N=128$ and $\beta=6$ are reported.  Here, we use the same nine gauge
configurations considered in the tests of CGNR applied to the
unreduced Wilson matrix $D$ reported in Figure~\ref{CGNor} and we 
see that CGNR applied to $\widehat{D}$ needs half as many iterations on average in order to reduce the 
$\ell_2$ norm of the relative residual by the factor $10^{-8}$.

\begin{figure}
\begin{center}
\includegraphics[width=.6\textwidth]{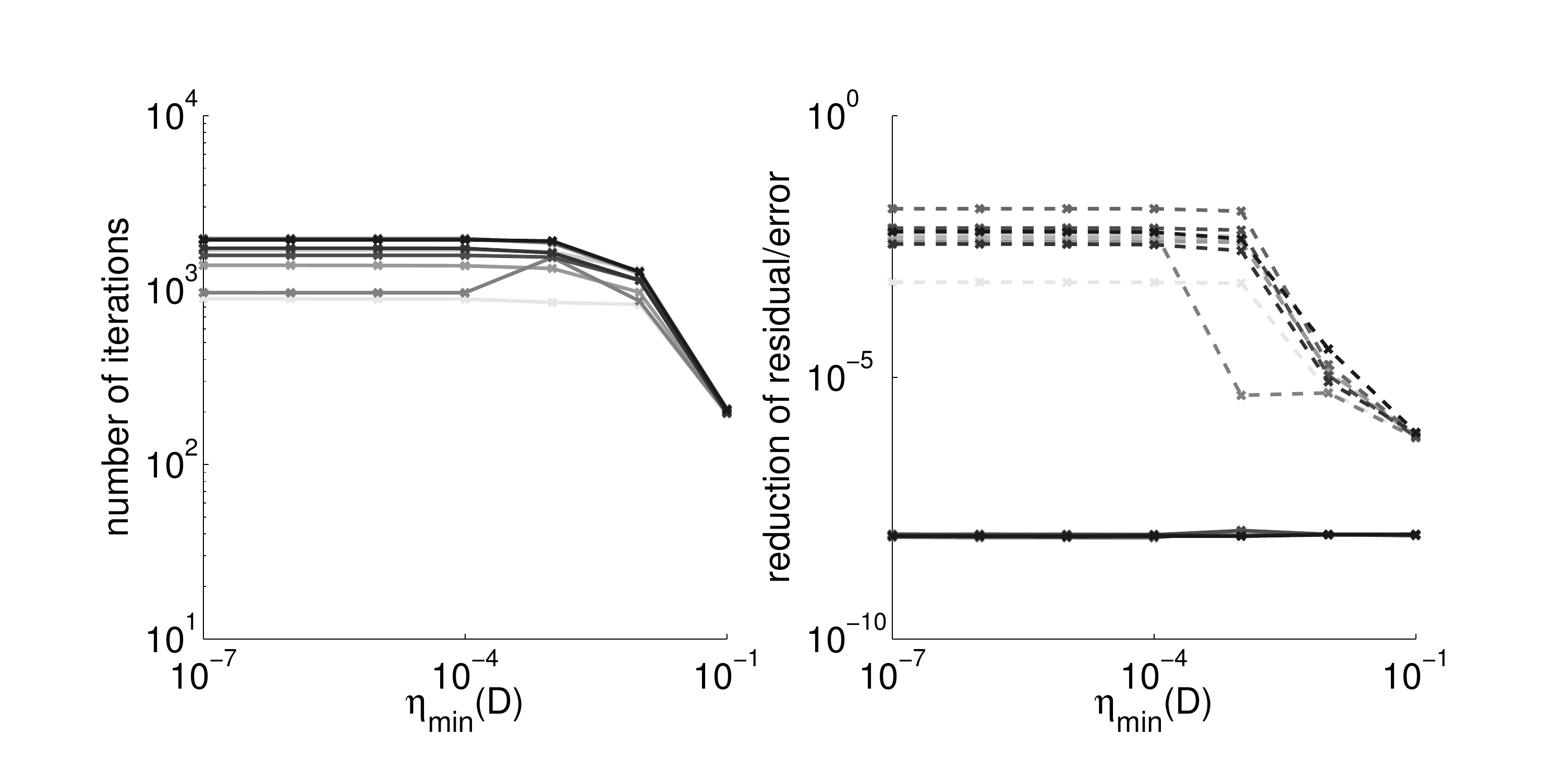}
 \caption{Results of CGNR applied to the odd-even reduced system with 
$N=128$ and $\beta = 6$. 
The results for the light to dark lines correspond to
different gauge field configurations, going from light to dark with increasing configuration number. 
On the left, the number of iterations needed to reduce the relative residual by a factor 
$10^{-8}$ is plotted against 
$\eta_{\min}(D)$ defined in~\eqref{eq:etamin}, corresponding to different
diagonal shifts $m$. On the right, the resulting
relative residuals (solid lines) and relative errors (dashed lines) are plotted
against $\eta_{\min}(D)$.\label{CGNodd}}
\end{center}
\end{figure}

Since CGNR for the system with $\widehat{D}$ is the default solver in
various lattice QCD simulation codes~\cite{Osborne_POS_2009,MILC}, our
construction of the proposed BAMG algorithm is based on the odd-even
reduced matrix $\widehat{D}$.  Fig.~\ref{fig:oddevencoarsening}~(c) illustrates the full coarsening strategy we use for $\widehat{D}$ 
on the first even (coarse) grid and on all subsequent grids of the AMG
hierarchy, namely, we define every other grid point in every
dimension as a coarse grid point.  Now, if in addition nearest-neighbor (restriction and) interpolation 
and a (Petrov) Galerkin coarse-grid construction are used, then it follows that the resulting coarse-grid operator 
again has the same sparsity structure, as depicted in Fig.~\ref{fig:oddevencoarsening} (d). 

An important observation for the derivation of the bootstrap setup cycle presented in the next section is that the Schur complement on the even grid also satisfies the $\gamma_5$-symmetry, 
so that the results we present for $D$ hold for $\widehat{D}$ as well. 
To see that $\widehat{D}$ satisfies the $\gamma_5$-symmetry, 
consider the corresponding block form of $\Gamma_5$, i.e., 
\begin{equation*}
  \Gamma_{5} = \begin{pmatrix}
    \Gamma_o   &    \\
    &  \Gamma_e
  \end{pmatrix},
\end{equation*} where $\Gamma_{o}, \Gamma_{e}$ have the same structure
as $\Gamma_{5}$. Direct computation shows that we have $D_{oe}^{H}\Gamma_{o} = \Gamma_{e}D_{eo},
D_{eo}^{H}\Gamma_{e} = D_{oe}\Gamma_{o}$ and, thus,
\begin{equation}\label{eqevZ2}
\widehat{D}^H \Gamma_e = \Gamma_e \widehat{D}.
\end{equation}
\subsection{Least squares restriction and interpolation}\label{sec:lsp}
The first main component of the bootstrap AMG algorithm is its use of a least
squares process to form the restriction and interpolation
operators. The least squares restriction $R$ and interpolation $P$ are defined to fit
collectively sets of left and right test vectors, respectively. The
test vectors used in these fits characterize the near kernel of the system matrix. In order to
simplify notation, we present the least
squares process first in a more general setting, i.e., for general sets 
of interpolatory variables $\mathcal{C}_i$, and 
then discuss the specific definition we use in the proposed algorithm,
which we base on the block-spin structure of the Wilson matrix given in~\eqref{eq:bst}.
Further, we present only the construction of $P$ and note that
$R^H$ is obtained in complete analogy. 

Let $\widehat{\Omega}$ denote the set of all variables of the linear
system~\eqref{eq:dirac}, then given a set of coarse variables $\coarsevar
\subset \widehat{\Omega}$, e.g., defined by full coarsening, we set $\finevar = \widehat{\Omega}
\setminus \coarsevar$. Further, let the set of interpolatory variables for 
a fine variable $i\in \mathcal{F}$ be denoted by 
by $\mathcal{C}_i$, typically consisting of neighboring coarse
variables. Then the structure of $P$ is defined by 
\begin{equation*}\label{eq:nearneigh} 
P_{ij} = 0 \: \mbox{for} \: j \notin \mathcal{C}_i.
\end{equation*}

Once the sets of interpolatory variables, $\mathcal{C}_i$, and a set of test vectors, $\tvV = \{\widetilde{v}^{(1)}, \ldots,
\widetilde{v}^{(k)}\} \subset \mathbb{C}^n$, have been determined, the $i$th
row of $P$ for $i\in \finevar$, denoted by $p_i$, 
is then defined as the minimizer of the local least squares problem: 
\begin{equation}\label{eq:LSfuncrowi}
\mathcal{L}(p_i) = 
\sum_{\kappa=1}^k\omega_k\left(\widetilde{v}_{\{i\}}^{(\kappa)} - \sum_{j\in \coarsevar_{i}} \left(p_{i}\right)_{j} \widetilde{v}_{\{j\}}^{(\kappa)}\right)^{2} \mapsto \min.
\end{equation} 
Here, the notation $\widetilde{v}_{\widetilde{\Omega}}$
denotes the canonical restriction of the vector $\widetilde{v}$ to
the set $\widetilde{\Omega} \subset \widehat{\Omega}$, e.g.,
$\widetilde{v}_{\{i\}}$ is simply the $i$th entry of $\widetilde{v}$.
The weights $\omega_{\kappa}>0$ are chosen to reflect the importance
of the corresponding test vector 
(e.g., its $A$-norm $\|\widetilde{v}\|^2_A = \langle
A\widetilde{v},\widetilde{v}\rangle$ when $A$ is Hermitian and
positive definite).  We give our specific choice of the weights in the next section.
Conditions on the uniqueness of the solution to minimization problem
\eqref{eq:LSfuncrowi} and an explicit form of the minimizer have been
derived in \cite{BAMG2010}.

In the case where there is more than one variable per grid point\footnote{This is the case for the 2-dimensional Wilson matrix where two variables, corresponding to
the two spin components, are defined at each grid point.}, the interpolatory sets used in the
least squares process are defined as follows. The sets $\mathcal{F}$ and $\mathcal{C}$ are defined
with respect to grid points rather than variables. That is, all
variables at a given grid point are marked either fine or coarse
collectively. Then, given a grid point $i\in \mathcal{F}$ with associated
variables $i_{1},\ldots, i_{m}$, interpolation is built
independently for each variable. The set of interpolatory 
variables $C_{i_{*}}$ for each variable is then composed of a subset or
all of the variables defined at grid points $j\in \mathcal{C}$ in the
neighborhood of grid point $i$. 
In the following sections, we show that by imposing further conditions
on the sets of interpolatory variables $C_{i_{*}}$ it is  
possible to preserve the spin structure and $\gamma_5$-symmetry on
coarse grids, which is an important feature in our proposed bootstrap setup algorithm.

\subsection{The bootstrap multigrid setup cycle}\label{sec:bootamg}  
{The second main component of the bootstrap AMG setup is the bootstrap cycle used for computing the test
vectors that are needed in the least squares interpolation process \eqref{eq:LSfuncrowi}. 
The proposed bootstrap cycle setup uses two complementary processes to compute the 
sets of tests vectors: \medskip
\begin{enumerate}[$1.$]
\item a solver applied to appropriately formulated homogenous problems on different grids, 
as in adaptive AMG,
\item a multigrid eigensolver.
\end{enumerate}\medskip
As discussed in detail below, the main idea in the latter multigrid
eigensolver is to use appropriate mass matrices to formulate
generalized eigenproblems on coarser grids in such a way that these
generalized  eigenproblems can be directly related to the finest-grid
eigenproblem.} 

Following the reasoning in Section \ref{sec:intro}, we use the fact that these smooth error components
are characterized by left and right singular vectors with small
singular values.  That is, in analogy to the Hermitian and positive definite case, we seek to construct test vectors that capture the
algebraically smooth components of the error, i.e., vectors $x$ such
that $Dx \approx 0$ or $D^H x \approx 0$.  
Let $D = U \Sigma V^H$ be the SVD of the non-Hermitian Wilson matrix
$D$, where $U = [u_{1}\mid\ldots\mid u_{n}] \in \mathbb{C}^{n \times n}$ and $V=[v_{1}\mid\ldots\mid v_{n}] \in \mathbb{C}^{n \times n}$ are unitary matrices.
Then, the left and right singular vectors and corresponding singular values 
are given by the triplets $(\sigma_{i}, u_{i},v_{i})$ that satisfy the equations 
\begin{eqnarray*} 
D v_{i} &=&\sigma_{i} u_{i} , \\
D^H u_{i} &=&\sigma_{i} v_{i}. 
\end{eqnarray*}
Now, since these equations are equivalent to the Hermitian and indefinite
eigenvalue problem (cf.~\cite{golub_kahan_1965,lanczos_1961})
\begin{equation}\label{eq:symev}
  \begin{pmatrix}
    & D    \\
    D^H & 
  \end{pmatrix}
  \begin{pmatrix}
    U & U  \\    
    V   & - V
  \end{pmatrix}  = 
  \begin{pmatrix}
    U & U  \\    
    V  & - V
  \end{pmatrix}
  \begin{pmatrix}
    \Sigma & \\
    & -\Sigma
  \end{pmatrix}
  ,
\end{equation} 
it follows that a Petrov Galerkin bootstrap AMG process for the non-Hermitian matrix $D$
can be reformulated as a Galerkin bootstrap AMG process for the Hermitian
system~\eqref{eq:symev}, to which the algorithm developed
in~\cite{BAMG2010} can be applied. This approach was proposed for the
computation of singular triplets in~\cite{hans_arxiv} and is described
in the following.

Starting with the finest-grid system $D_{1}$, the Petrov Galerkin bootstrap setup cycle 
begins by applying relaxation (e.g., Kaczmarz iterations) to the homogeneous systems
\begin{equation}\label{homog}
D_l  \: \widetilde{v}_l^{(\kappa)} =0, \quad \text{and} \quad D_l^H \: \widetilde{u}_l^{(\kappa)} =0, \quad \kappa
= 1,..., k_r, \quad l=1,...,L-1 ,
\end{equation}
to compute some initial sets of right $\tvV^r_l = \{\widetilde{v}^{(\kappa)}_{l}, \: \kappa = 1,\ldots,k_r\}$ and left  $\tvU^r_l = \{\widetilde{u}^{(\kappa)}_{l}, \: \kappa = 1,\ldots,k_r\}$ test vectors used in constructing the least
squares interpolation $P_{l+1}^{l}$ and restriction $R^{l+1}_{l}$
operators, $l=1,...,L-1 $
, respectively, 
and thereby also the corresponding coarse-grid operators.  
On the finest grid, $k_r$ distinct random vectors are used as the
initial guesses for the Kaczmarz iterations applied to each of the two
systems in \eqref{homog}.
On all subsequent grids except the coarsest, $ l=1,...,L-1 $, 
the resulting relaxed vectors computed on finer grids $l$
are restricted to the coarser grids $l+1$ and used as the initial guesses
for Kaczmarz applied to the two systems in \eqref{homog} there.  

Once such an initial multigrid hierarchy has been constructed, 
the current sets of test vectors are updated with approximations
of the near kernel that are computed using a bootstrap multigrid eigensolver
based on the existing multigrid structure.
The bootstrap multigrid eigensolver begins by
computing the $k_e$ left and right singular vectors with the smallest
singular values of a generalized SVD for the coarsest grid  
operator, $D_L$.
More specifically, define the composite restriction and interpolation operators for $
l=2,...,L$ by
\begin{eqnarray*}\label{composRnP}
P_{l} &=& P_{2}^{1}\cdot \ldots \cdot P_{l}^{l-1},\\
R_{l} &=&  R_{l-1}^{l}\cdot \ldots \cdot R_{1}^{2},
\end{eqnarray*} 
and correspondingly the coarse-grid operators and associated mass matrices by $D_l =
R_l D P_l$,  $Q_l = R_l R_l^H$, and $T_l = P_l^{H}P_l$. The triplets
$(\widetilde{\sigma}_{L}^{(\kappa)},
\widetilde{u}_{L}^{(\kappa)},\widetilde{v}_{L}^{(\kappa)}), \kappa =
1, ..., k_e$, corresponding 
to the $k_e$ smallest singular values of the coarsest-grid system are
then computed by solving the generalized singular value problem
\begin{eqnarray}
D_L \widetilde{v}_L^{(\kappa)}& &= \widetilde{\sigma}_L^{(\kappa)} Q_L \widetilde{u}_L^{(\kappa)}, \label{SVDRnP1} \\
D_L^H \widetilde{u}_L^{(\kappa)}& &= \widetilde{\sigma}_L^{(\kappa)} T_L \widetilde{v}_L^{(\kappa)}. \label{SVDRnP2}
\end{eqnarray} 
We note that, since the size of the coarsest-grid system matrix $D_L$ is small, these triplets 
can be computed directly by solving  
the equivalent generalized Hermitian (indefinite) eigenvalue problem on the
coarsest grid given by
\begin{equation}\label{eq:cSCD}
\begin{pmatrix}
      & D_L    \\
    D_L^H & 
\end{pmatrix}
 \begin{pmatrix}
     \widetilde{U} & \widetilde{U}  \\    
    \widetilde{V}   & - \widetilde{V}
\end{pmatrix}  = 
  \begin{pmatrix}
  Q_L   &     \\
    &  T_L
\end{pmatrix} \begin{pmatrix}
     \widetilde{U} & \widetilde{U}  \\    
    \widetilde{V}  & - \widetilde{V}
    \end{pmatrix}
\begin{pmatrix}
 \widetilde{\Sigma} & \\
  & -\widetilde{\Sigma}
\end{pmatrix}
 , 
 \end{equation} 
where the diagonal entries of $\widetilde{\Sigma}$ contain the ordered
approximate singular values. Note, that~\eqref{eq:cSCD} is obtained
from~\eqref{eq:symev} by a Galerkin construction using the interpolation operator
\begin{equation*}
  \widehat{P}_{L} =
  \begin{pmatrix}
    R_{L} & \\
    & P_{L}
  \end{pmatrix}.
\end{equation*}
The following observation guides the construction of the bootstrap
multigrid eigensolver: If $(\widetilde{\sigma}_{l}^{(\kappa)},
\widetilde{u}_{l}^{(\kappa)},\widetilde{v}_{l}^{(\kappa)})$ is a
triplet of the finer grid SVD and if there exist coarse-grid vectors
$\widetilde{u}_{l+1}^{(\kappa)}$ and
$\widetilde{v}_{l+1}^{(\kappa)}$ such that
$\widetilde{u}_{l}^{(\kappa)}  = R_{l+1}^{l}
\widetilde{u}_{l+1}^{(\kappa)}$ and $\widetilde{v}_{l}^{(\kappa)} =
P_{l+1}^{l} \widetilde{v}_{l+1}^{(\kappa)} $, then
 $(\widetilde{\sigma}_{l+1}^{(\kappa)},
 \widetilde{u}_{l+1}^{(\kappa)},\widetilde{v}_{l+1}^{(\kappa)})$ is a
 triplet on the coarse-grid, i.e,
\begin{equation*}
  R^{l+1}_{l}D_{l}P_{l+1}^{l} \widetilde{v}_{l+1}^{(\kappa)} = \widetilde{\sigma}_{l+1}^{(\kappa)}
 Q_{l} \widetilde{u}_{l+1}^{(\kappa)}, 
 \end{equation*} 
 and, with $P^{l+1}_{l}:= (P^{l}_{l+1})^{H}$,
\begin{equation*}
P^{l+1}_{l}D^H_{l}R_{l+1}^{l} \widetilde{u}_{l+1}^{(\kappa)} = \widetilde{\sigma}_{l+1}^{(\kappa)}
  T_{l} \widetilde{v}_{l+1}^{(\kappa)}.
\end{equation*}
This result gives a relation among the singular triplets
computed in the bootstrap setup on all grids, which we now use
to derive a multigrid eigensolver for the Hermitian
system~\eqref{eq:symev}.  
On finer grids, starting with $l = L-1$, we define a smoother for the systems
\begin{eqnarray}
D_{l}\widetilde{v}_{l}^{(\kappa)}& &= \widetilde{\sigma}_{l}^{(\kappa)} Q_{l} \widetilde{u}_{l}^{(\kappa)}, \label{SVDRnP3}\\
D_{l}^H\widetilde{u}_{l}^{(\kappa)}& &=
\widetilde{\sigma}_{l}^{(\kappa)} T_{l} \widetilde{v}_{l}^{(\kappa)}, \label{SVDRnP4}
\end{eqnarray} 
by a scheme that applies the Kaczmarz iteration 
to each of these two systems separately, alternating between the
two. 
Here, the singular value approximations
$\widetilde{\sigma}_{l}^{(\kappa)} $ are 
updated after each such alternating sweep as
\begin{eqnarray}\label{eq:svals}
\displaystyle{\widetilde{\sigma}_{l}^{(\kappa)} =   \frac{\langle D_{l} \widetilde{v}_{l}^{(\kappa)},\widetilde{u}_{l}^{(\kappa)}\rangle }{\langle Q_{l}\widetilde{u}_{l}^{(\kappa)} ,\widetilde{u}_{l}^{(\kappa)}\rangle^\frac12
\langle T_{l} \widetilde{v}_{l}^{(\kappa)},\widetilde{v}_{l}^{(\kappa)}\rangle^\frac12 },  \quad \kappa = 1, \hdots , k_e. }
\end{eqnarray}
After several such sweeps, the resulting approximations are normalized with respect to the mass matrices
$$
\widetilde{v}_{l}^{(\kappa)} = \frac{\widetilde{v}_{l}^{(\kappa)}}{ \langle T_{l} \widetilde{v}_{l}^{(\kappa)},\widetilde{v}_{l}^{(\kappa)}\rangle^\frac12} \quad \mbox{
and} \quad 
\widetilde{u}_{l}^{(\kappa)} =  \frac{\widetilde{u}_{l}^{(\kappa)} }{\langle Q_{l}\widetilde{u}_{l}^{(\kappa)} ,\widetilde{u}_{l}^{(\kappa)}\rangle^\frac12}
\; .$$
They are then added to the sets of right and left test vectors
\begin{eqnarray*}
\tvU_l = \tvU_l^r \cup \tvU_l^e \quad &&\text{with} \quad \tvU^e_l := \{\widetilde{u}^{(\kappa)}_{l}, \: \kappa = 1,\ldots,k_e\}, \\
\tvV_l =  \tvV^r_l \cup \tvV^e_l \quad &&\text{with} \quad \tvV^e_l:= \{\widetilde{v}^{(\kappa)}_{l}, \: \kappa = 1,\ldots,k_e\},
\end{eqnarray*} 
which are used to recompute $R_{l+1}^{l}$ and $P_{l+1}^{l}$, respectively, using the least squares process.
Here, the superscripts $r$ and $e$ are used to distinguish between the sets of test vectors resulting
from applying Kaczmarz relaxation to the homogenous problems \eqref{homog}
and the sets of test vectors coming from 
the Kaczmarz iterations applied to \eqref{SVDRnP3} and \eqref{SVDRnP4}, with initial guesses coming from 
solutions to the coarsest-grid eigenproblem~\eqref{eq:cSCD}. 

\subsection{The $\gamma_5$-symmetry and Galerkin coarsening}
A Galerkin coarsening scheme for the non-Hermitian Wilson matrix was first considered
in the context of an adaptive aggregation-based multigrid solver in~\cite{Bran_PRL_2010}. 
The idea was motivated by the $\gamma_5$-symmetry of the Wilson
 matrix. Recall that, by~\eqref{eq:rightevleftev} for each  
 eigenpair $(\lambda,v_\lambda)$ there corresponds
a left eigenpair $(\bar{\lambda},\Gamma_5 v_\lambda)$. 
This motivates the choice $R_{1}^{2} = (\Gamma_{5}P_{2}^{1})^{H}$ on
the finest grid.  Maintaining a similar relation $R^{l+1}_{l}
= (\Gamma_{5,l}P_{l+1}^{l})^H$ on all coarser grids means that
the coarse grid system should satisfy
\begin{equation*}
  \Gamma_{5,l+1} D_{l+1} = D_{l+1}^H\Gamma_{5,l+1},
\end{equation*} with $\Gamma_{5,l+1}$ a unitary and Hermitian matrix
inherited from $\Gamma_{5,l}$ on grid $l$. This is achieved in a
Galerkin approach, i.e., $D_{l+1} = (P_{l+1}^{l})^HD_{l}P_{l+1}^{l}$,
if interpolation satisfies
\begin{equation}\label{eq:gammaC}
  \Gamma_{5,l} P_{l+1}^{l}  =
 P_{l+1}^{l} \Gamma_{5,l+1},
\end{equation} which, in turn, is fulfilled if we enforce that each
variable only interpolates from variables of the same spin. In other words, 
we fix the sparsity of interpolation according to the spin ordering of $\Gamma_{5,l}$ and 
$\Gamma_{5,l+1}$ as
\begin{equation}\label{eq:bs}
  P_{l+1}^l = 
  \begin{pmatrix}
    \ast  &    \\
    & \ast 
  \end{pmatrix},
\end{equation} which gives $\Gamma_{5,l} = \gamma_{5} \otimes I_{n_{l}}$.
In the context of full coarsening of the grid, as defined in Section \ref{sec:coarsening}, 
for each grid point $i\in\finevar$ we obtain the structure in~\eqref{eq:bs}
if we define the interpolatory
sets for the two spin variables $i_1$ and $i_2$ at grid point $i$ independently as
\begin{equation}\label{eq:Csubi}
\coarsevar_{i_*} = \bigg\{ j_* \in \coarsevar \: \bigg| \:
D_{i_*,j_*} \neq 0 \bigg\}.
\end{equation}
In this way, a given spin variable $i_*$ at grid point $i$ interpolates only from variables $j_*$ 
of the same spin defined at neighboring grid points $j$.  
Using these assumptions, direct computation shows that the
Petrov Galerkin coarse-grid correction reduces to a Galerkin
correction for $D$ (cf.~\cite{Bran_PRL_2010}), i.e., for $R^{l+1}_{l}
= (\Gamma_{5,l}P_{l+1}^{l})^H$ we have
\begin{equation*}
  P_{l+1}^{l}\left(R^{l+1}_{l}D_{l}P_{l+1}^{l}\right)^{-1}R^{l+1}_{l}
=  P_{l+1}^{l}\left(\left(P^{l}_{l+1}\right)^{H}D_{l}P_{l+1}^{l}\right)^{-1}\left(P^{l}_{l+1}\right)^{H}.
\end{equation*}

This same structure preserving form of $P$ and the $\gamma_{5}$-symmetry of $D$ 
imply, in addition, the equivalence of the Petrov Galerkin
bootstrap AMG setup for $D$ and a Galerkin approach for the Hermitian indefinite
matrix $Z = \Gamma_{5}D$ as we are going to show next.

Indeed, since $Z$ is Hermitian there exists a unitary matrix $V \in \linspace[n\times
n]{C}$ such that 
\begin{equation*}\label{eq:ev_sv}
Z = \Gamma_5D = V\Lambda V^H
\end{equation*} with $\Lambda =
\operatorname{diag}(\lambda_{1},\ldots,\lambda_{n})$, which directly implies
\begin{equation}\label{eq:ev_sv2}
  D = \Gamma_5 V \operatorname{sign}(\Lambda) |\Lambda| V^H.
\end{equation}
Thus, $U = \Gamma_5 V \operatorname{sign}(\Lambda)$, $V = V$ are
unitary and with $S = |\Lambda|$ we have found an expression for the
SVD of $D$ in terms of eigenvectors and eigenvalues of $Z$. 
This relation implies that the space spanned by any pair of right and
left singular vectors $u_{i}, v_{i}$ satisfies
\begin{equation*}\label{eq:gammaUV}
\operatorname{span}(u_{i}) = \operatorname{span}(\Gamma_5 v_{i}),
\end{equation*} since the factor $\operatorname{sign}(\lambda_{i})$ does
not change the $\operatorname{span}$. 
This result is also true for any pair of subspaces spanned by
a collection of pairs of singular vectors. As a consequence, the choice $R^{l+1}_{l}
= (\Gamma_{5,l}P_{l+1}^{l})^H$, which was motivated in~\cite{Bran_PRL_2010} by the
correspondence between left and right eigenvectors with respect to the
$\gamma_5$-symmetry, is now justified in
terms of left and right singular vectors, assuming that $P_{l+1}^{l}$ is
constructed from test vectors approximating right singular vectors to small
singular values.
Additionally, we have the following new results relating the setup for
$D$ and $Z$.

\begin{theorem}\label{thm:svd2eig}
Assume that $R^{l+1}_{l} = (\Gamma_{5,l}P_{l+1}^{l})^H$ and $P^{l}_{l+1}$ has the block structure defined in~\eqref{eq:bs} such that 
$\Gamma_{5,l}P_{l+1}^{l} = P_{l+1}^{l}\Gamma_{5,l+1}$.
Then, the two equations~\eqref{SVDRnP3} and~\eqref{SVDRnP4} are equivalent to  
$$Z_l \widetilde{v}_{l}^{(\kappa)} = \widetilde{\lambda}_{l}^{(\kappa)} T_{l} \widetilde{v}_{l}^{(\kappa)}, \quad \mbox{with}  \quad \widetilde{u}_{l}^{(\kappa)} = \Gamma_{5,l} \widetilde{v}_{l}^{(\kappa)},$$
where $Z_l = \Gamma_{5,l} D_l.$
Thus, in particular, we have the following equivalences.
\begin{enumerate}[$(i)$]
\item
The singular-value problem on the coarsest grid given
by~\eqref{SVDRnP1} and~\eqref{SVDRnP2} is equivalent to the generalized eigenproblem
\begin{equation*}\label{eq:eigQ}
Z_L \widetilde{V} = T_L \widetilde{V} \widetilde{\Lambda}, \text{\
  where\ } Z_L = \Gamma_{5,L} D_L \widetilde{V} .
\end{equation*} 
\item
Kaczmarz relaxation applied to either of the equations~\eqref{SVDRnP3}
or~\eqref{SVDRnP4} reduces to applying Kaczmarz to the equation
\begin{equation*}\label{eq:SVDV}
Z_l \widetilde{v}_{l}^{(\kappa)} = \widetilde{\lambda}_{l}^{(\kappa)} T_{l} \widetilde{v}_{l}^{(\kappa)}.
\end{equation*} 
More precisely, the correction $s_i$ used in the Kaczmarz updates for
the systems with $D_l$, defined via the equation
\begin{equation*}\label{eq:kaczsvd}
  \left\langle D_{l}^H \left[\widetilde{\lambda}_{l}^{(\kappa)} T_{l} \Gamma_{5,l}\widetilde{v}_{l}^{(\kappa)}  - D_{l} (\widetilde{v}_{l}^{(\kappa)}+ s_i e_i)\right], e_i \right\rangle = 0,
\end{equation*} 
can equivalently be computed using the equation for $s_i$ in terms of $Z_l$ given by
\begin{equation*}\label{eq:kaczeig}
\left\langle Z_{l} \left[\widetilde{\lambda}_{l}^{(\kappa)} T_{l}\widetilde{v}_{l}^{(\kappa)}  - Z_{l} (\widetilde{v}_{l}^{(\kappa)}+ s_i e_i)\right], e_i \right\rangle = 0.
\end{equation*}
\item The singular value approximations defined in~\eqref{eq:svals}
  satisfy $\widetilde{\sigma}_{l}^{(\kappa)} =
  |\widetilde{\lambda}_{l}^{(\kappa)} |$, where
  $\widetilde{\lambda}_{l}^{(\kappa)}$ is the Ritz value
\begin{equation}\label{eqevZ}
\displaystyle{\widetilde{\lambda}_{l}^{(\kappa)} 
= 
 \frac{\langle Z_l \widetilde{v}_{l}^{(\kappa)},\widetilde{v}_{l}^{(\kappa)}\rangle }{
\langle T_{l} \widetilde{v}_{l}^{(\kappa)},\widetilde{v}_{l}^{(\kappa)}\rangle }. }
\end{equation}
\end{enumerate}
\end{theorem}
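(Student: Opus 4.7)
The whole theorem hinges on three algebraic identities that propagate the fine-grid $\gamma_5$-symmetry through the multigrid hierarchy. First I would establish the composite identity
\begin{equation*}
R_{l} \;=\; \Gamma_{5,l}\,P_{l}^{H}\,\Gamma_{5,1},
\end{equation*}
by induction on $l$: starting from $R_{l}^{l+1} = (\Gamma_{5,l}P_{l+1}^{l})^{H}= (P_{l+1}^{l})^{H}\Gamma_{5,l}$, the commutation hypothesis $(P_{l+1}^{l})^{H}\Gamma_{5,l} = \Gamma_{5,l+1}(P_{l+1}^{l})^{H}$ lets one pull every $\Gamma_{5,i}$ to the left through the product of transposed interpolations until a single $\Gamma_{5,l}$ remains on the left and $\Gamma_{5,1}$ on the right. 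Using $\Gamma_{5,j}^{2}=I$ this immediately gives
\begin{equation*}
Q_{l} = R_{l}R_{l}^{H} = \Gamma_{5,l}T_{l}\Gamma_{5,l},
\qquad
\Gamma_{5,l}T_{l} = T_{l}\Gamma_{5,l},
\qquad
D_{l}^{H}\Gamma_{5,l} \;=\; \Gamma_{5,l}D_{l} \;=\; Z_{l},
\end{equation*}
the last of which shows that $Z_{l}$ is Hermitian on every level (the fine-grid identity $\Gamma_{5,1}D = D^{H}\Gamma_{5,1}$ is \eqref{eqevZ2}, and it is inherited on coarse grids because $R_{l} = \Gamma_{5,l}P_{l}^{H}\Gamma_{5,1}$).

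With these identities in hand, the reduction of the two generalized SVD equations is a direct substitution. Setting $\widetilde u_{l}^{(\kappa)} = \Gamma_{5,l}\widetilde v_{l}^{(\kappa)}$ in \eqref{SVDRnP3} and using $Q_{l}\Gamma_{5,l} = \Gamma_{5,l}T_{l}$ yields $D_{l}\widetilde v_{l}^{(\kappa)} = \widetilde\sigma_{l}^{(\kappa)}\Gamma_{5,l}T_{l}\widetilde v_{l}^{(\kappa)}$; multiplying by $\Gamma_{5,l}$ produces $Z_{l}\widetilde v_{l}^{(\kappa)} = \widetilde\lambda_{l}^{(\kappa)}T_{l}\widetilde v_{l}^{(\kappa)}$ with $|\widetilde\lambda_{l}^{(\kappa)}| = \widetilde\sigma_{l}^{(\kappa)}$, the sign coming from the sign of the Rayleigh-type quotient. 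Substituting the same relation in \eqref{SVDRnP4} and using $D_{l}^{H}\Gamma_{5,l}=Z_{l}$ gives the identical equation, so the two SVD relations collapse to one generalized Hermitian eigenproblem. Claim (i) is just this argument on the coarsest level $l=L$.

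For claim (ii), I would take the Kaczmarz stationarity condition for $D_{l}\widetilde v_{l}^{(\kappa)} = \widetilde\lambda_{l}^{(\kappa)}T_{l}\Gamma_{5,l}\widetilde v_{l}^{(\kappa)}$ and insert $\Gamma_{5,l}\Gamma_{5,l}=I$ immediately after $D_{l}^{H}$; since $D_{l}^{H}\Gamma_{5,l} = Z_{l}$ and $\Gamma_{5,l}T_{l}\Gamma_{5,l} = T_{l}$, the inner product transforms verbatim into the Kaczmarz equation in terms of $Z_{l}$, and the two conditions therefore produce the same scalar $s_{i}$. For claim (iii), I would plug $\widetilde u_{l}^{(\kappa)} = \Gamma_{5,l}\widetilde v_{l}^{(\kappa)}$ into \eqref{eq:svals}: the numerator becomes $\langle Z_{l}\widetilde v_{l}^{(\kappa)},\widetilde v_{l}^{(\kappa)}\rangle$ and both denominator factors collapse to $\langle T_{l}\widetilde v_{l}^{(\kappa)},\widetilde v_{l}^{(\kappa)}\rangle^{1/2}$ using $Q_{l}=\Gamma_{5,l}T_{l}\Gamma_{5,l}$ and unitarity of $\Gamma_{5,l}$, yielding exactly the absolute value of the Ritz quotient \eqref{eqevZ}.

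The only real obstacle is the bookkeeping at step one: propagating the $\gamma_5$-commutation through a product of non-square rectangular operators and checking that the sign of $\widetilde\lambda_{l}^{(\kappa)}$ is absorbed correctly (so that $\widetilde\sigma_{l}^{(\kappa)} = |\widetilde\lambda_{l}^{(\kappa)}|$ rather than $\widetilde\lambda_{l}^{(\kappa)}$ itself). Once $R_{l} = \Gamma_{5,l}P_{l}^{H}\Gamma_{5,1}$ is established, everything else reduces to insertions of $\Gamma_{5,l}^{2}=I$.
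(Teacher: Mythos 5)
Your overall strategy matches the paper's: establish $Q_l = T_l$, use the intertwining of $\Gamma_5$ with interpolation to relate the singular triplets of $D_l$ to Ritz pairs of $Z_l$, and then obtain (i)--(iii) by substitution. The paper's proof is essentially these three lines, stated tersely via~\eqref{eq:ev_sv2}. However, your first stepping stone --- the composite identity
\[
R_l = \Gamma_{5,l}\,P_l^H\,\Gamma_{5,1}
\]
--- is not correct. Using the transposed commutation relation $(P_{l+1}^l)^H\Gamma_{5,l}=\Gamma_{5,l+1}(P_{l+1}^l)^H$ repeatedly gives the composite intertwining $\Gamma_{5,l}P_l^H = P_l^H\Gamma_{5,1}$, so your proposed right-hand side collapses to $P_l^H\Gamma_{5,1}\Gamma_{5,1}=P_l^H$. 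But already at $l=2$ one has
\[
R_2 \;=\; R_1^2 \;=\; (\Gamma_{5,1}P_2^1)^H \;=\; P_2^H\,\Gamma_{5,1}\;\neq\; P_2^H ,
\]
so the identity fails by a factor of $\Gamma_{5,1}$ on even levels. Building the composite from $R^{l+1}_l=(P^l_{l+1})^H\Gamma_{5,l}$ and pushing each $\Gamma$ to the right, the $\Gamma_5$-factors cancel in pairs, leaving $R_l = P_l^H\Gamma_{5,1}^{\,l-1}$, which alternates between $P_l^H\Gamma_{5,1}$ and $P_l^H$ depending on the parity of $l$.

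The good news is that the quantity you actually need, $Q_l$, is insensitive to this residual $\Gamma_{5,1}$-factor: $Q_l = R_lR_l^H = P_l^H\Gamma_{5,1}^{\,l-1}\Gamma_{5,1}^{\,l-1}P_l = P_l^HP_l = T_l$, using $\Gamma_{5,1}^2 = I$. So the conclusion $Q_l = T_l$, which is exactly the first line of the paper's own proof, is correct; but the route through the asserted composite $R_l$-formula is not, and should be replaced (or simply dropped in favor of computing $Q_l$ directly as above). Once $Q_l = T_l$ is in hand, the remainder of your argument --- substituting $\widetilde u_l^{(\kappa)}=\Gamma_{5,l}\widetilde v_l^{(\kappa)}$ and $\widetilde\sigma_l^{(\kappa)}=|\widetilde\lambda_l^{(\kappa)}|$ into \eqref{SVDRnP3}--\eqref{SVDRnP4}, inserting $\Gamma_{5,l}^2=I$ inside the Kaczmarz inner products, and reducing \eqref{eq:svals} to the Ritz quotient \eqref{eqevZ} --- tracks the paper's ``parts (i)--(iii) follow by substitution'' faithfully. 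I would also be slightly more careful in asserting $D_l^H\Gamma_{5,l}=\Gamma_{5,l}D_l$ on coarse levels: this is the coarse-grid $\gamma_5$-symmetry that the paper invokes via \eqref{eq:ev_sv2}, and a short justification via the Galerkin form $D_{l+1} = (P_{l+1}^l)^H D_l P_{l+1}^l$ together with the intertwining $\Gamma_{5,l}P_{l+1}^l = P_{l+1}^l\Gamma_{5,l+1}$ would tighten the argument.
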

\begin{proof}
Since $R^{l+1}_{l} = (\Gamma_{5,l}P_{l+1}^{l})^H$ we have $Q_l = T_l$.
Now, using \eqref{eq:ev_sv2} 
we have
$$ \widetilde{u}_{l}^{(\kappa)} = \operatorname{sign}\left(\widetilde{\lambda}_{l}^{(\kappa)}\right) \Gamma_{5,l} \widetilde{v}_{l}^{(\kappa)} \quad \text{and} \quad
\widetilde{\sigma}_{l}^{(\kappa)} =\operatorname{sign}\left(\widetilde{\lambda}_{l}^{(\kappa)}\right)\widetilde{\lambda}_{l}^{(\kappa)}.$$
Parts $(i)$--$(iii)$ now follow by substitution.
\end{proof}

This theorem implies that the overall Petrov Galerkin bootstrap AMG
setup process developed for the non-Hermitian Wilson matrix $D$ in
Section \ref{sec:bootamg} is equivalent to a Galerkin setup process for the Hermitian form of the Wilson matrix $Z$.
We provide additional details of the Galerkin bootstrap setup and multigrid eigensolver for $Z$ in Figure~\ref{fig:boot:setupcycle}.

\begin{figure}
\begin{center}
     \tikzstyle{greenpoint}=[circle,inner sep=0pt,minimum size=2mm,draw=black!100,fill=black!30]
    \tikzstyle{whitepoint}=[rectangle,inner sep=0pt,minimum size=2mm,draw=black!100,fill=black!80]
    \tikzstyle{redpoint}=[diamond,inner sep=0pt,minimum size=2.55mm,draw=black!100,fill=black!80]
    \tikzstyle{blackpoint}=[circle,inner sep=0pt,minimum size=2mm,draw=black!100,fill=black!100]
    \tikzstyle{bluepoint}=[circle,inner sep=0pt,minimum size=2mm,draw=black!100,fill=black!0]
    \resizebox{\textwidth}{!}{\begin{tikzpicture}
      \draw [sharp corners] (0,0) node[blackpoint] {} --
      ++(300:1cm) node[blackpoint] {} --
      ++(300:1cm) node[blackpoint] {} --
      ++(300:1cm) node[blackpoint] {} --
      ++(300:1cm) node[redpoint] (L4) {} --
      ++(60:1cm) node[bluepoint] (L3) {} --
      ++(60:1cm) node[bluepoint] (L2) {} --
      ++(60:1cm) node[bluepoint] (L1) {} --
      ++(60:1cm) node[whitepoint] (L0) {} --
      ++(0:1cm) node[greenpoint] {} --
      ++(300:1cm) node[greenpoint] (end) {};
      \draw[dashed] (end) --  ++(300:1cm);
      
      \draw (L0) + (3cm,0) node[blackpoint,label=right:{\small Relax on $Zv=0, v \in \tvV^r$, compute $P$}] {};
      \draw (L1) + (3.5cm,0) node[redpoint,label=right:{\small Compute $v$, s.t., $Zv=\lambda Tv$, update $\tvV^e$}] {};
      \draw (L2) + (4.0cm,0) node[bluepoint,label=right:{\small Relax on $Zv = \lambda T v, v \in \tvV^e$}] {};
      \draw (L3) + (4.5cm,0) node[greenpoint,label=right:{\small Relax on $Zv=0, v \in \tvV^r$ and $Zv = \lambda T v, v \in \tvV^e$, recompute $P$}] {};
      \draw (L4) + (5cm,0) node[whitepoint,label=right:{\small Test
        MG method, update $\tvV$}] {};

      \draw (L3) +(0,4cm) node (start) {};
      \draw[sharp corners] (start) ++(300:-3cm) node[blackpoint] {} --
      ++(300:1cm) node[blackpoint] {} --
      ++(300:1cm) node[blackpoint] {} --
      ++(300:1cm) node[redpoint] {} --
      ++(60:1cm) node[greenpoint] {} --
      ++(300:1cm) node[redpoint] {} --
      ++(60:1cm) node[bluepoint] {} --
      ++(60:1cm) node[greenpoint] {} --
      ++(300:1cm) node[greenpoint] {} --
      ++(300:1cm) node[redpoint] {} --
      ++(60:1cm) node[greenpoint] {} --
      ++(300:1cm) node[redpoint] {} --
      ++(60:1cm) node[bluepoint] {} --
      ++(60:1cm) node[greenpoint] {} --
      ++(300:1cm) node[greenpoint] {} --
      ++(300:1cm) node[redpoint] {} --
      ++(60:1cm) node[bluepoint] {} --
      ++(60:1cm) node[bluepoint] {} --
      ++(60:1cm) node[whitepoint] {} -- 
      ++(0:1cm) node[greenpoint] {} --     
      ++(300:1cm) node[greenpoint] {} --
      ++(300:1cm) node[greenpoint] {} --
      ++(300:1cm) node[redpoint] {} --
      ++(60:1cm) node[greenpoint] (end) {};
      \draw[dashed] (end) --  ++(300:1cm);
    \end{tikzpicture}}
  \caption{Galerkin Bootstrap AMG W cycle and V cycle setup schemes.\label{fig:boot:setupcycle}}
\end{center}
\end{figure}
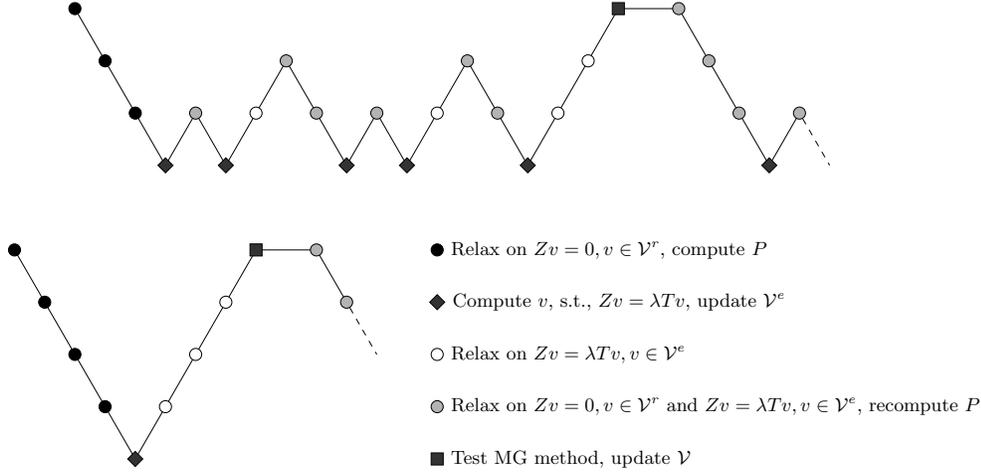

\section{Numerical results}\label{sec:numres}

In this section, we present numerical tests of our Ga\-ler\-kin bootstrap
AMG setup algorithm for the Wilson discretization of the Dirac
equation. We apply our method to the Schur complement system resulting
from an odd-even reduction of the Wilson matrix: $$\widehat{D}\psi_{e} = b_{e} - D_{eo}b_{o},$$ with
$\widehat{D}$ defined as in \eqref{eq:wilsonSchur}, i.e., $D_{1} := \widehat{D}$.  In all tests, the
sets of coarse variables are defined by full coarsening (see Figure~\ref{fig:oddevencoarsening}).  
We use nearest neighbor interpolation defined in terms of the graph of the matrix $\widehat{D}$, as
in \eqref{eq:Csubi} which preserves the spin-structure of the system. 
The maximal number of interpolatory points is
bounded by four and the Galerkin coarse-grid operator has the same
sparsity pattern on all grids, with at most 18 non-zeros per
row and, thus, the grid and operator complexities (cf.~\cite{Trottenberg:2000:MUL:374106}) are bounded by $1.4$
independent of the problem size.

Fixing the coarsening and sparsity pattern of $P$ as such, we study the performance of the bootstrap algorithm applied 
to the Hermitian and indefinite matrix $\Gamma_e \widehat{D}$ in \eqref{eqevZ2} for $N=128$ and $256$ with $\beta = 3,6,10$ in \eqref{eq:wilsongauge} and various choices of the shifts, $m$, used in setting the minimal eigenvalue of the Wilson matrix $D = mI +D_0$.  
For completeness, we consider nine distinct gauge field configurations 
for each pair of parameters $m$ and $\beta$.  
In the plots, the (dashed) lines from
light to dark correspond to increasing configuration numbers, 
from 11,000, ..., 19,000, respectively.

The weighted least squares approach in~\eqref{eq:LSfuncrowi} is used
to define the entries of the interpolation operators.  We set the
number of test vectors computed using relaxation as $k_{r}=|\tvV_{r}|
=8$ and the number of additional eigenvector approximations computed
in the multigrid eigensolver as $k_{e}=|\tvV_{e}|=8$.   The least
squares form of $P$ is then computed on each grid using the combined
sets of up to $k = k_{r} + k_{e}$ test vectors (the initial hierarchy
is constructed using only $k_{r}$ test vectors).  On the finest grid, the vectors,
$v^{(1)}, \ldots , v^{(k_r)}$, used to initialize the bootstrap process,
are generated randomly and independently with a normal distribution with expectation
zero and variance one. 

We use a $W(4,4)$ cycle solve phase with Kaczmarz smoothing.  The
problem is coarsened to a coarsest-grid system with $N=16$, which is
solved directly, giving 4- and 5-grid methods for the $N=128$ and
$N=256$ problem sizes, respectively.  The reported estimates of asymptotic
convergence rates, $\rho$, of the resulting solver are computed by
\begin{equation*}
\rho = \frac{\|e^{\nu}\|}{\|e^{\nu-1}\|},
\end{equation*}
where $e^\nu$ denotes the error after $\nu$ multigrid iterations,
i.e., the asymptotic convergence rate is measured upon convergence to
the specified tolerance or after $\nu = 100$ iterations.
The number of BAMG preconditioned GMRES($32$) iterations needed to
reduce the $\ell_2$ norm of the relative residual to this same
tolerance is also reported.

\subsection{The 2D Wilson matrix -- Bootstrap W cycle setup
}\label{sec:bamgWres}
In our first set of tests, we use a W(10,10) cycle with Kaczmarz
relaxation in the first bootstrap setup cycle.  Then, after an
intermediate adaptive step in which we apply two W(4,4) cycles to update the
test vector with the smallest value of $|\widetilde{\lambda}_{0}^{(\kappa)}|$ in \eqref{eqevZ} on the finest grid only, we apply a second W(5,5) setup
cycle to update the sets of test vectors on all grids which are used to compute 
the final multigrid hierarchy.   
After extensive testing of the proposed setup approach, we found these
settings to yield a robust solver for all test problems considered.  A
few remarks regarding these choices of the settings for the setup
algorithm are in order before presenting the results of these
experiments. 
 
First, we note that the extra smoothing steps are applied in the
initial W cycle since we have observed in practice that this gives a
sufficiently accurate initial hierarchy from which the solver can then
be constructed.  Generally, using fewer relaxation steps and a larger
number of bootstrap setup cycles is less efficient than an approach in which
more relaxation steps are used in each of the cycles, so that fewer cycles are
needed to obtain a suitable solver.  
Moreover, for the highly ill-conditioned Wilson matrices 
we consider, we find that at least two bootstrap cycles with one intermediate adaptive step
is needed in order to obtain an efficient solver for all test
problems, unless we increase the number of relaxations that are used in the initial bootstrap cycle significantly.  

 Additionally, we mention that the intermediate adaptive step is applied only to a single test vector, namely,
 the one that yields the smallest value of $|\widetilde{\lambda}_{0}^{(\kappa)}|$ in \eqref{eqevZ} .  
 While we use this step in all
 tests, we have observed that it turns out to be beneficial mostly in cases where the
 shift is chosen such that $\eta_{min}(D)$ in~\eqref{eq:etamin} is almost
 zero, i.e., for the most ill-conditioned cases.  In such cases, this simple
 modification to the algorithm reduces the number of bootstrap 
 setup cycles needed to obtain an efficient solver by at least one and in most
 cases two or more, assuming the number of smoothing steps are not increased.   

 As a final remark, we comment that the use of W cycles as opposed to
 V cycles in the setup and solve phases of the algorithm is needed to
 compensate for the fact that the problem is coarsened to $N=16$ and
 that the maximum number of interpolation variables is limited to four.  We
  observed that for certain realizations of the gauge fields, the resulting
  coarsest spaces as defined in our algorithm are too lean to compensate
  for the large number of near kernel vectors that they have to
  approximate.  An alternative strategy, that we did not explore here,
  would be to increase the number of interpolation points on
  coarser grids.

\begin{figure}
  \begin{center}
    \subfigure[$N=128$, $\beta = 3$ \label{fig:boot:Wresults1283}]{
      \includegraphics[width=.9\textwidth]{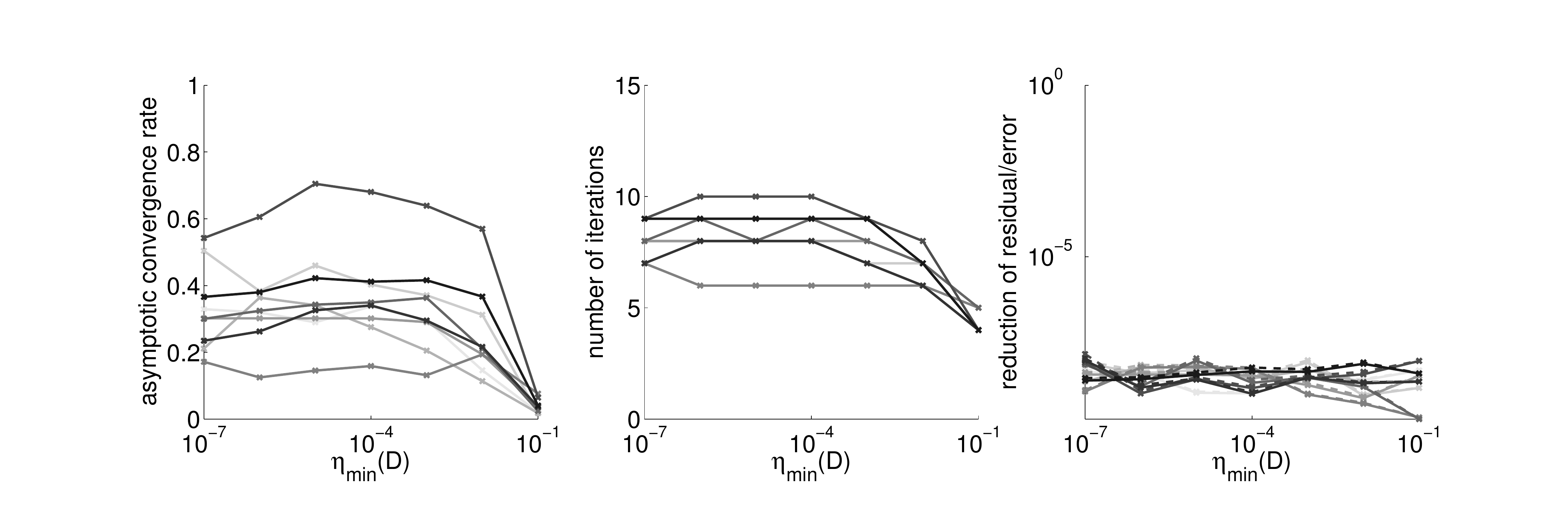}}
    \subfigure[$N=128$, $\beta = 6$ \label{fig:boot:Wresults1286}]{
      \includegraphics[width=.9\textwidth]{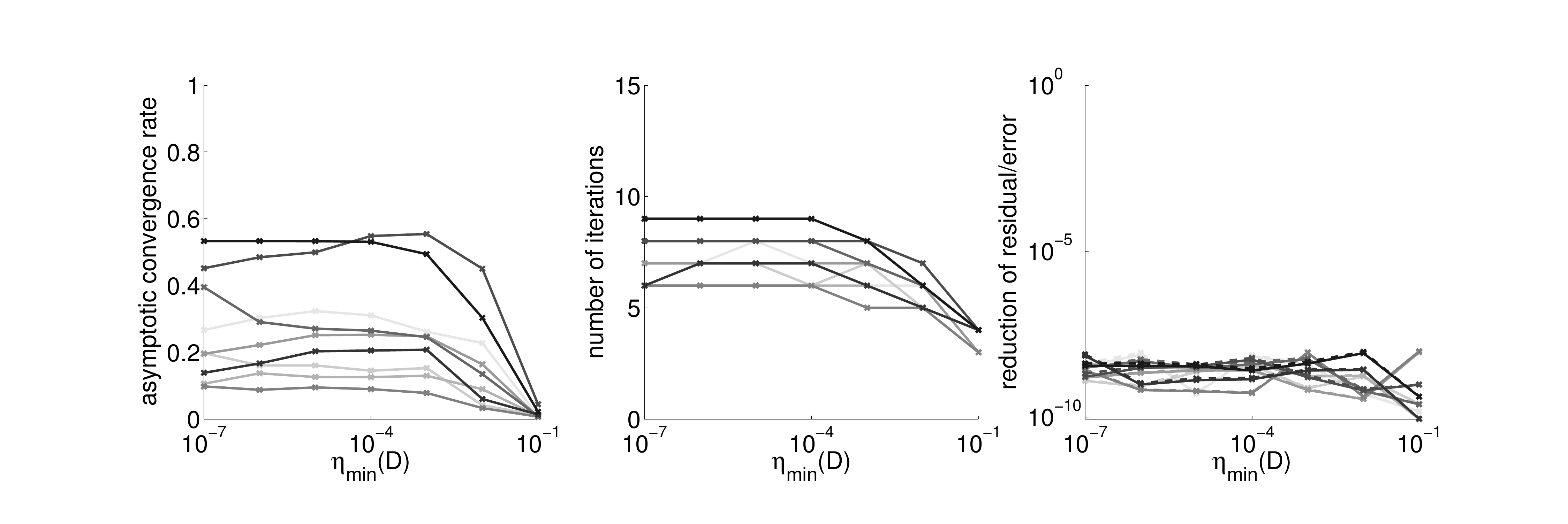}}
    \subfigure[$N=128$, $\beta = 10$ \label{fig:boot:Wresults12810}]{
      \includegraphics[width=.9\textwidth]{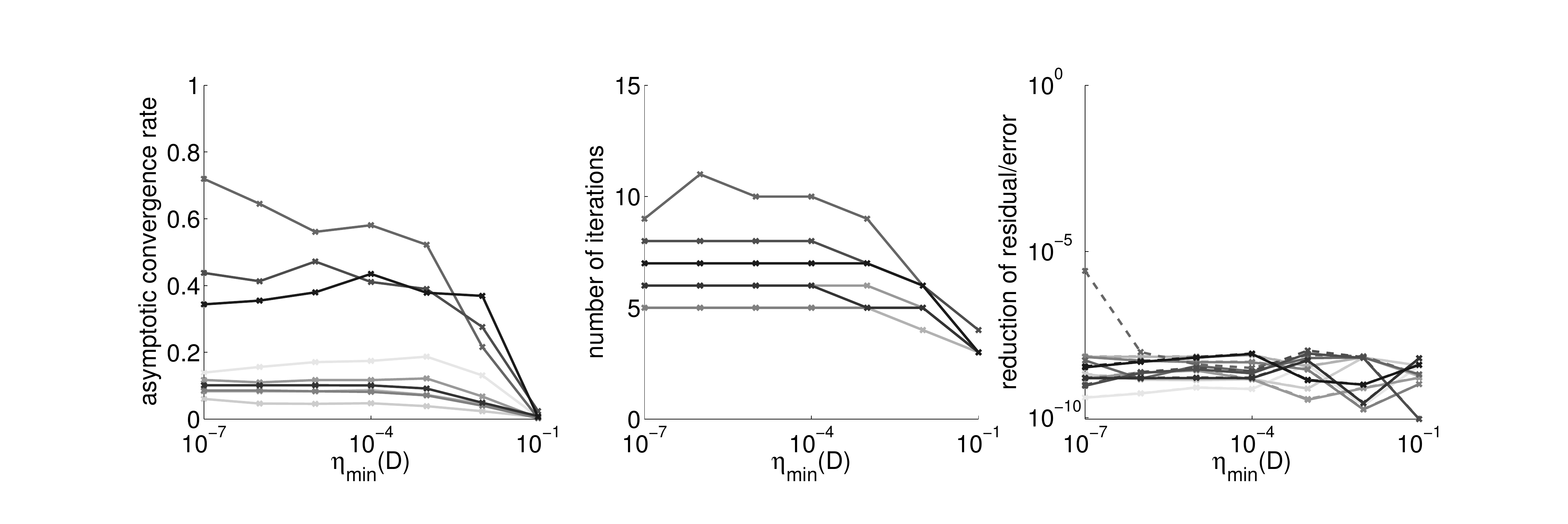}}
  \end{center}        
  \caption{Results of bootstrap AMG and bootstrap AMG preconditioned GMRES($32$) applied to the 
    odd-even reduced matrix for $N=128$ and different values of
    $\beta$. The results for the 
    light to dark lines
    correspond to different gauge field configurations, going from light to dark with
    increasing configuration number. 
    On the left, plots of the estimates of the convergence rates $\rho$
    for the stand-alone solver versus
    $\eta_{\min}(D)$ defined in~\eqref{eq:etamin}, corresponding to different
    diagonal shifts $m$, are provided. 
    In the middle, the number of bootstrap AMG  preconditioned
    GMRES($32$) iterations needed to reduce the $\ell_2$ norm of the
    relative residual by a factor of $10^{-8}$ 
    is plotted against $\eta_{\min}(D)$.  The plots on the right
    contain the $\ell_2$ norms of the relative residuals (solid lines) and relative errors (dashed lines)
    computed using the resulting solution versus $\eta_{\min}(D)$.
    \label{fig:boot:Wresults128}} 
\end{figure}  

\begin{figure}
\begin{center}
  \subfigure[$N=256$, $\beta = 3$ \label{fig:boot:Wresults2563}]{
        \includegraphics[width=.9\textwidth]{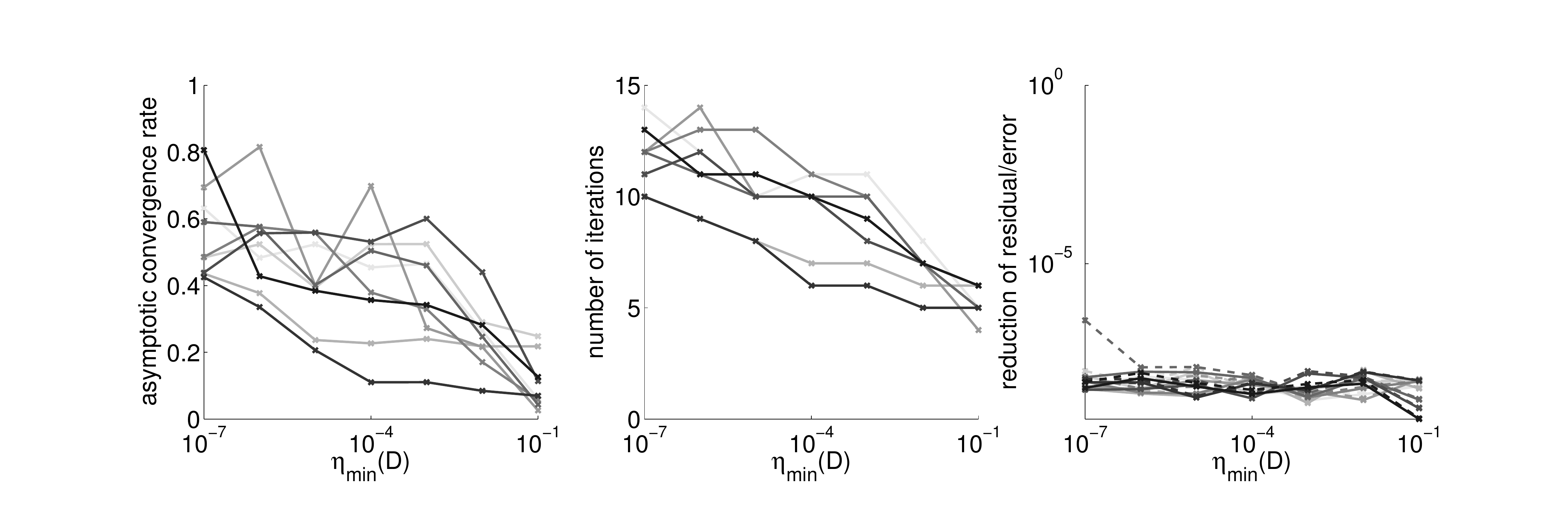}}
 \subfigure[$N=256$, $\beta = 6$ \label{fig:boot:Wresults2566}]{
        \includegraphics[width=.9\textwidth]{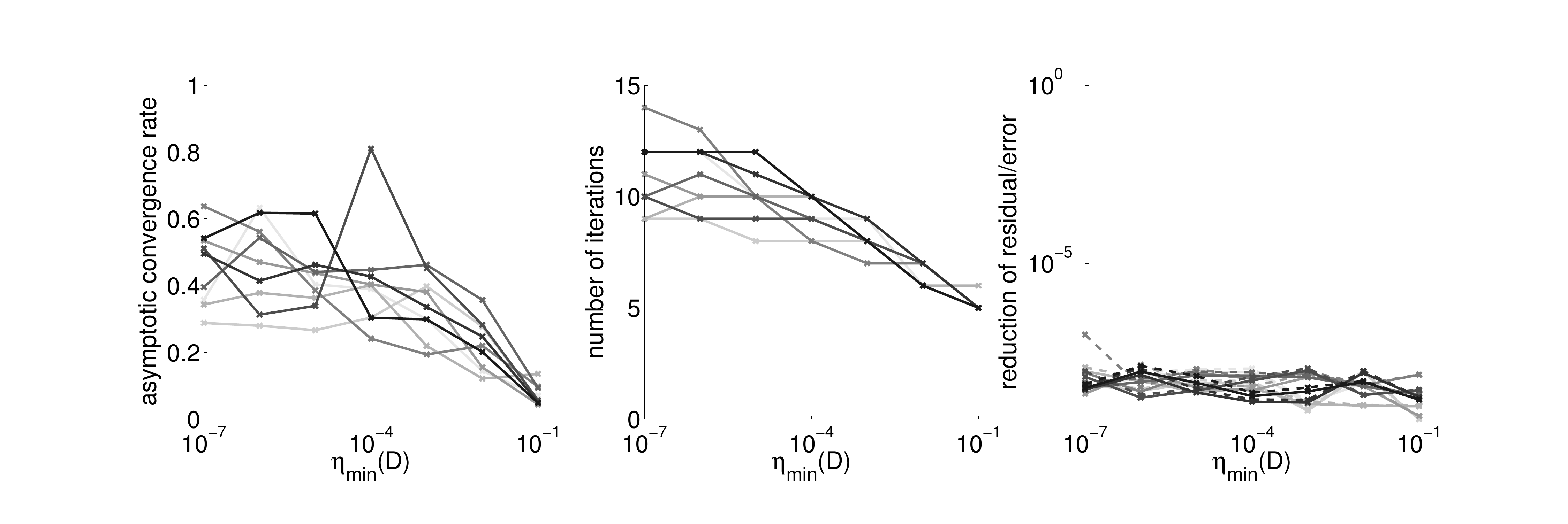}}
 \subfigure[$N=256$, $\beta = 10$ \label{fig:boot:Wresults25610}]{
        \includegraphics[width=.9\textwidth]{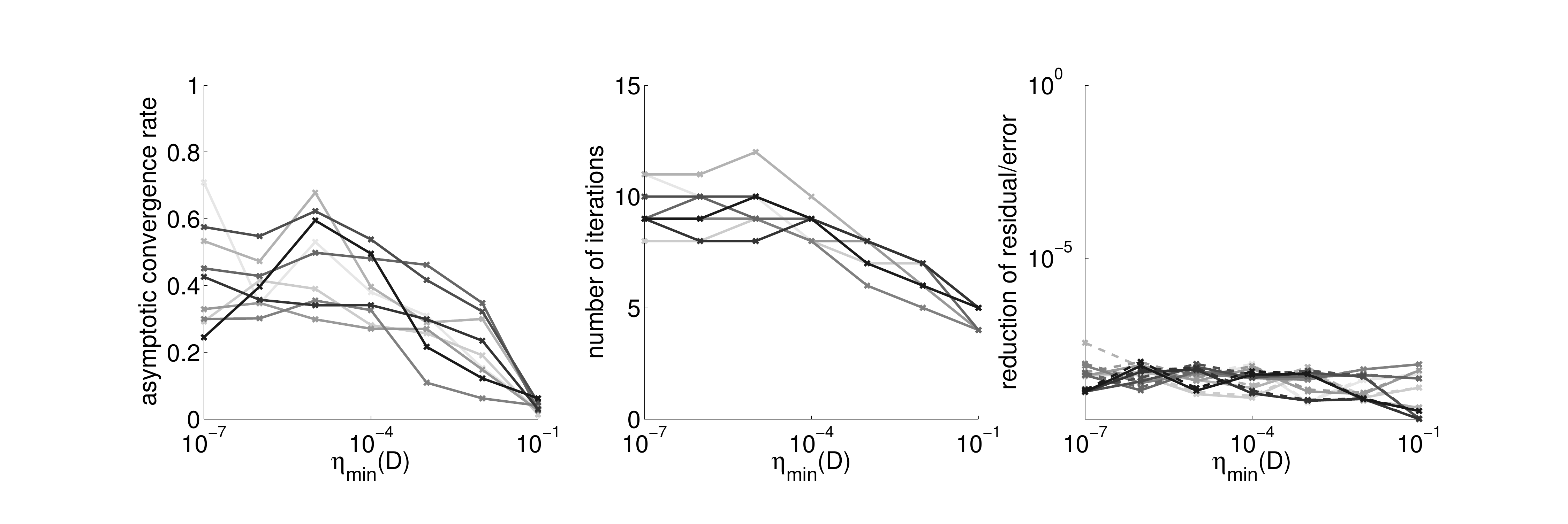}}
  \end{center}        
  \caption{Results of bootstrap AMG and bootstrap AMG preconditioned GMRES($32$) applied to the 
    odd-even reduced matrix for $N=256$ and different values of
    $\beta$. The results for the light to dark lines
    correspond to different gauge field configurations, going from light to dark with
    increasing configuration number. 
    On the left, plots of the estimates of the convergence rates $\rho$
    for the stand-alone solver versus
    $\eta_{\min}(D)$ defined in~\eqref{eq:etamin}, corresponding to different
    diagonal shifts $m$, are provided.  In the middle, the number of bootstrap AMG  preconditioned
    GMRES($32$) iterations needed to reduce the $\ell_2$ norm of the
    relative residual by a factor of $10^{-8}$ 
    is plotted against $\eta_{\min}(D)$.  The plots on the right
    contain the $\ell_2$ norms of the relative residuals (solid lines) and relative errors (dashed lines)
    computed using the resulting solution versus $\eta_{\min}(D)$.
\label{fig:boot:Wresults256}} 
\end{figure}  

The results of these experiments for $N=128$ are reported in Figure~\ref{fig:boot:Wresults128}
and the ones in Figure~\ref{fig:boot:Wresults256} are for $N=256$.
For both problem sizes we report results for $\beta=3,6,10$ for
nine different gauge field configurations.  
Here, we see that for both problem sizes, the stand-alone solver is convergent and only in few exceptional cases do these rates
go above $0.6$. In addition, the number of preconditioned GMRES($32$) iterations is also fairly uniform for fixed $N$ and varying values of
$\beta$, the minimum eigenvalue of $D$, and different
configurations.  Further, although the number of preconditioned GMRES($32$) iterations
seems to grow slightly for the larger problem sizes, we see that the number of iterations is reduced by roughly two orders of magnitude when
compared to the number of iterations needed by CGNR without preconditioning, as reported in Figure~\ref{CGNodd} for the odd-even system and in all tests the outer preconditioned GMRES($32$) method never
reaches a restart.
Finally, we observe that in almost all cases the errors and residuals are
within an order of magnitude, which further demonstrates the effectiveness of the proposed method.

\subsection{The 2D Wilson matrix -- Bootstrap super-V cycle setup
}
\begin{figure}
\begin{center}
  \includegraphics[width=.9\textwidth]{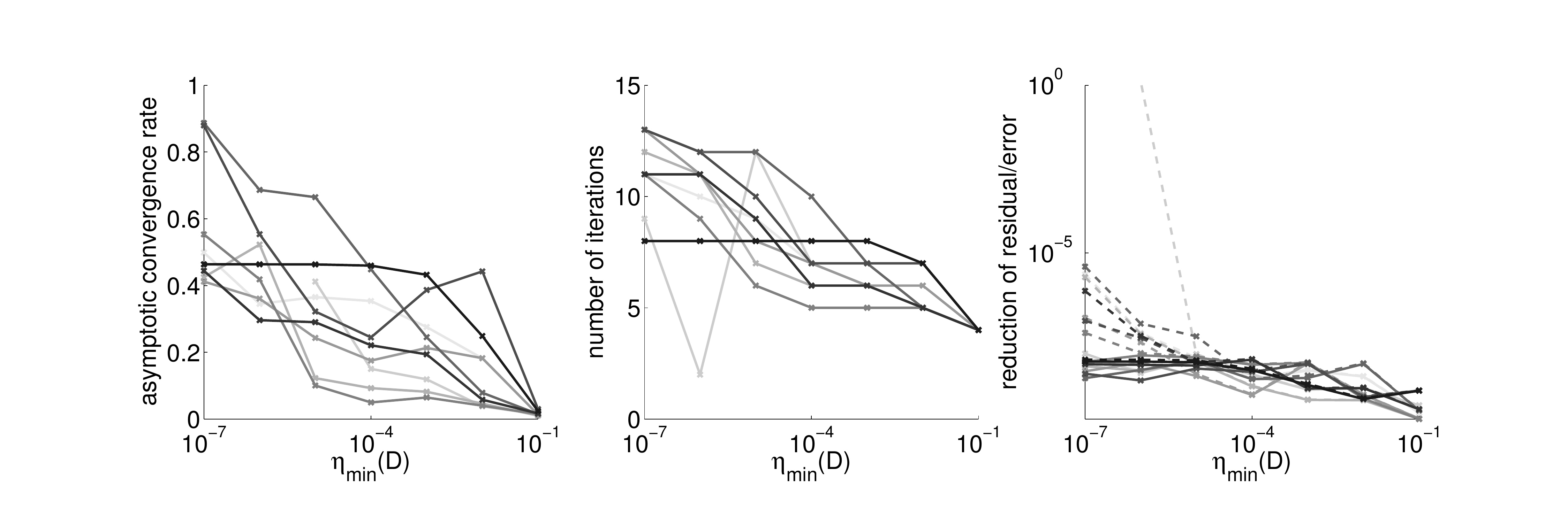}
       \end{center}        
  \caption{Results of super-V cycle setup and W cycle bootstrap AMG and bootstrap AMG preconditioned GMRES($32$) solvers applied to the 
    odd-even reduced matrix for $N=128$ and $\beta =6$. The results for the 
    light to dark lines
    correspond to different gauge field configurations, going from light to dark with
    increasing configuration number. 
    On the left, plots of the estimates of the convergence rates $\rho$
    for the stand-alone solver versus
    $\eta_{\min}(D)$ defined in~\eqref{eq:etamin}, corresponding to different
    diagonal shifts $m$, are provided. 
    In the middle, the number of bootstrap AMG  preconditioned
    GMRES($32$) iterations needed to reduce the $\ell_2$ norm of the
    relative residual by a factor of $10^{-8}$ 
    is plotted against $\eta_{\min}(D)$.  The plots on the right
    contain the $\ell_2$ norms of the relative residuals (solid lines) and relative errors (dashed lines)
    computed using the resulting solution versus $\eta_{\min}(D)$.
    \label{fig:boot:SVresults}} 
\end{figure}  

In this section, to test the impact of the coarsest-grid eigensolver on the overall effectiveness of the bootstrap AMG
process, we consider a modified setup cycle in which the number of
relaxations used on each grid is the same as in an overall W cycle,
but each grid is visited only once. Specifically, we consider a $V(\mu 2^l,\mu 2^l)$ cycle in the setup phase and as before $W(4,4)$ cycles in
the solve phase. Here, $l$ denotes the given grid and the notation $V(\mu 2^l,\mu 2^l)$
means that we use $\mu 2^l$ pre-smoothing iterations and $\mu 2^l$
post-smoothing iterations on grid $l=1,...,L-1$. 
Thus, the number of smoothing steps applied on each grid is the same as in a W$(\mu,\mu)$ cycle, but the coarsest-grid eigensolve is applied only once.  We use the same settings for the setup cycles as we used in Section \ref{sec:bamgWres}, 
i.e., $k_r = k_e = 8$, $\mu = 10$ in the first super-V cycle setup, and $\mu = 5$ in the second setup cycle.  
We test the method for $N=128$ and $\beta = 6$
and use the same nine gauge field configurations 
we used in the previous section.  

The results of these tests are reported in Figure~\ref{fig:boot:SVresults}.  As we can see in the plots on the left and in the middle in the figure, the stand-alone solver and preconditioner with this setup strategy is less effective than the one coming from the W cycle setup.  Further, we see in the plots on the right that for some of the most ill-conditioned cases, the residuals and errors again differ by several orders of magnitude.  Together, these results demonstrate the effectiveness of the method when the coarsest-grid eigensolver is repeated within the bootstrap setup process, especially in the most ill-conditioned cases.

\section{Concluding Remarks}\label{sec:conclusions}
In this paper, we designed and tested a bootstrap approach for
computing multigrid interpolation operators for the non-Hermitian
Wilson discretization of the Dirac equation. 
As in any efficient multigrid solver, these operators have to be accurate for the near kernel vectors of the problem's 
finest-grid operator. Here, this is achieved by defining interpolation
to fit, in a least squares sense, a set of test vectors that
collectively approximate the near-kernel vectors of the Dirac
matrix.  A main new result is given in Theorem~\ref{thm:svd2eig}, where we used the
$\gamma_5$-symmetry of the Wilson matrix to reduce a Petrov Galerkin
multigrid algorithm for the non-Hermitian Wilson matrix $D$ to a
Galerkin approach for $D$, where the setup process is applied to the
Hermitian and indefinite version $Z = \Gamma_{5}D$ of the Wilson
matrix to derive the solver for $D$ (and $Z$).  

Further, extensive numerical tests have shown that using least squares interpolation together
with a bootstrap AMG setup based on an equivalent Hermitian indefinite
form of the Wilson matrix and an intermediate adaptive setup cycle
leads, in practise, to a robust AMG setup algorithm and thereby solver
and preconditioner for the
Wilson matrix over a wide range of problem parameters. 
All numerical experiments presented in the paper were carried out for
Wilson's discretization of the 2-dimensional Dirac equation on a structured grid, using full
coarsening  and interpolation with a fixed nearest neighbor sparsity
pattern.  This allowed us to concentrate on developing and testing
least squares techniques for computing the interpolation operators and
the impact of the bootstrap AMG setup, the multigrid eigensolver, and
the adaptive step on the efficiency of the resulting multigrid solver.
Generally, we have shown that with a proper choice of these components
of the algorithm, a robust and efficient solver can be constructed.
We note, in addition, that all of the derivations and heuristic arguments used
in formulating the proposed algorithm carry over directly to the 4-dimensional
Wilson matrix arising in Quantum Chromodynamic simulations so that the proposed solver is expected to work well
in this setting, too.

\bibliography{BAMG}

\providecommand{\noopsort}[1]{}
\begin{thebibliography}{10}

\bibitem{Bran_PRL_2010}
R.~Babich, J.~Brannick, R.~C. Brower, M.~A. Clark, T.~A. Manteuffel, S.~F.
  McCormick, J.~C. Osborn, and C.~Rebbi.
\newblock Adaptive multigrid algorithm for the lattice {W}ilson-{D}irac
  operator.
\newblock {\em Phys. Rev. Lett.}, 105:201602, Nov 2010.

\bibitem{Osborne_POS_2009}
Ronald Babich, James Brannick, Richard~C. Brower, Michael~A. Clark, Saul~D.
  Cohen, et~al.
\newblock The {R}ole of multigrid algorithms for {LQCD}.
\newblock {\em PoS}, LAT2009:031, 2009.

\bibitem{BankDupont}
R.~Bank and T.~Dupont.
\newblock A comparison of two multilevel iterative methods for nonsymmetric and
  indefinite elliptic finite element equations.
\newblock {\em SIAM J. Numer. Anal.}, 18:701--718, 1981.

\bibitem{brannick_markov_2011}
M.~Bolten, A.~Brandt, J.~Brannick, A.~Frommer, K.~Kahl, and I.~Livshits.
\newblock Bootstrap {AMG} for {M}arkov chains.
\newblock {\em SIAM J. Sci. Comput.}, 33:3425--3446, 2011.

\bibitem{Bramble_93}
J.~Bramble, D.~Y. Kwak, and J.~Pasciak.
\newblock Uniform convergence of multigrid {V}-cycle iterations for indefinite
  and nonsymmetric problems.
\newblock In {\em Sixth Copper Mountain Conference on Multigrid Methods}, pages
  43--59, 1993.

\bibitem{BAMG2}
A.~Brandt.
\newblock Multiscale scientific computation: review 2001.
\newblock In T.~J. Barth, T.~F. Chan, and R.~Haimes, editors, {\em Multiscale
  and Multiresolution Methods: Theory and Applications}, pages 1--96. Springer,
  Heidelberg, 2001.

\bibitem{BAMG2010}
A.~Brandt, J.~Brannick, K.~Kahl, and I.~Livshits.
\newblock Bootstrap {AMG}.
\newblock {\em SIAM J. Sci. Comput.}, 33:612--632, 2011.

\bibitem{geo}
A.~Brandt, S.~McCormick, and J.~Ruge.
\newblock Algebraic multigrid ({AMG}) for automatic multigrid solution with
  application to geodetic computations.
\newblock Technical report, Colorado State University, Fort Collins, Colorado,
  1983.

\bibitem{oAMG}
A.~Brandt, S.~McCormick, and J.~Ruge.
\newblock Algebraic multigrid ({AMG}) for sparse matrix equations.
\newblock In D.~J. Evans, editor, {\em Sparsity and Its Applications}.
  Cambridge University Press, Cambridge, 1984.

\bibitem{Brannick_Trace_06}
J.~Brannick and L.~Zikatanov.
\newblock Algebraic multigrid methods based on compatible relaxation and energy
  minimization.
\newblock In O.~B. Widlund and D.~E. Keyes, editors, {\em Domain decomposition
  methods in science and engineering XVI}, volume~55, pages 15--26. Springer,
  2007.

\bibitem{MBrezina_2005}
M.~Brezina, R.~Falgout, S.~MacLachlan, T.~Manteuffel, S.~McCormick, and
  J.~Ruge.
\newblock Adaptive amg (a{AMG}).
\newblock {\em SIAM J. Sci. Comput.}, 26:1261--1286, 2005.

\bibitem{gesa}
M.~Brezina, T.~Manteuffel, S.~McCormick, J.~Ruge, and G.~Sanders.
\newblock Towards adaptive smooth aggregation {(aSA)} for nonsymmetric
  problems.
\newblock {\em SIAM J. Sci. Comput.}, 32:4--39, 2010.

\bibitem{Brower:1990ac}
R.~Brower, K.~Moriarty, C.~Rebbi, and E.~Vicari.
\newblock Variational multigrid for nonabelian gauge theory.
\newblock In *Tallahassee 1990, Proceedings, Lattice 90* 89-93. (see HIGH
  ENERGY PHYSICS INDEX 29 (1991) No. 11041).

\bibitem{Brower:1991en}
R.~C. Brower, K.~J.~M. Moriarty, C.~Rebbi, and E.~Vicari.
\newblock Multigrid propagators in the presence of disordered {U(1)} gauge
  fields.
\newblock {\em Phys. Rev.}, D43:1974--1977, 1991.

\bibitem{Brower:1991xv}
Richard~C. Brower, Robert~G. Edwards, Claudio Rebbi, and Ettore Vicari.
\newblock Projective multigrid for {W}ilson fermions.
\newblock {\em Nucl. Phys.}, B366:689--705, 1991.

\bibitem{Brower:1987dd}
Richard~C. Brower, Eric Myers, Claudio Rebbi, and K.~J.~M. Moriarty.
\newblock The multigrid method for fermion calculations in {Q}uantum
  {C}hromodynamics.
\newblock Print-87-0335 (IAS, PRINCETON).

\bibitem{Brower:1990at}
Richard~C. Brower, Claudio Rebbi, and Ettore Vicari.
\newblock Projective multigrid for propagators in lattice gauge theory.
\newblock {\em Phys. Rev. Lett.}, 66:1263--1266, 1991.

\bibitem{Brower:1991ni}
Richard~C. Brower, Claudio Rebbi, and Ettore Vicari.
\newblock Projective multigrid method for propagators in lattice gauge theory.
\newblock {\em Phys. Rev.}, D43:1965--1973, 1991.

\bibitem{DeGrand:2006zz}
T.~DeGrand and C.~E. Detar.
\newblock {\em Lattice Methods for {Q}uantum {C}hromodynamics}.
\newblock World Scientific, 2006.

\bibitem{DelDebbio:2005qa}
L.~Del~Debbio, Leonardo Giusti, M.~Luscher, R.~Petronzio, and N.~Tantalo.
\newblock Stability of lattice {QCD} simulations and the thermodynamic limit.
\newblock {\em JHEP}, 0602:011, 2006.

\bibitem{MILC}
A.~Bazavov et. al.
\newblock The {MILC} collaboration.
\newblock {\em Rev. Mod. Phys}, 82:1349--1417, 2010.

\bibitem{Rottman_arxiv}
Andreas Frommer, Karsten Kahl, Stefan Krieg, Bj\"{o}rn Leder, and Matthias
  Rottmann.
\newblock Aggregation-based multilevel methods for lattice {QCD}.
\newblock {\em pre-print}, arxiv:1202.2462v1, 2012.
\newblock submitted.

\bibitem{Gattringer:2010zz}
C.~Gattringer and C.~B. Lang.
\newblock Quantum {C}hromodynamics on the lattice.
\newblock {\em Lect. Notes Phys.}, 788:1--211, 2010.

\bibitem{golub_kahan_1965}
G.~Golub and W.~Kahan.
\newblock Calculating the singular values and pseudo-inverse of a matrix.
\newblock {\em Journal of the Society for Industrial and Applied Mathematics
  Series B Numerical Analysis}, 2(2):205--224, 1965.

\bibitem{Harmatz1991102}
M.~Harmatz, P.~G. Lauwers, R.~Ben-Av, A.~Brandt, E.~Katznelson, S.~Solomon, and
  K.~Wolowesky.
\newblock Parallel-transported multigrid and its application to the schwinger
  model.
\newblock {\em Nuclear Physics B - Proceedings Supplements}, 20(0):102 -- 109,
  1991.

\bibitem{local}
Tamas~G. Kovacs, Ferenc Pittler, Falk Bruckmann, and Sebastian Schierenberg.
\newblock High temperature quark localization by {P}olyakov loops.
\newblock {\em PoS}, LATTICE2011:200, 2011.

\bibitem{lanczos_1961}
C.~Lanczos.
\newblock {\em Linear Differential Operators}.
\newblock van Nostrand, 1961.

\bibitem{MBrezina_RFalgout_SMacLachlan_TManteuffel_SMcCormick_JRuge_2003}
R.~Falgout M.~Brezina, S.~MacLachlan, T.~Manteuffel, S.~McCormick, and J.~Ruge.
\newblock Adaptive smoothed aggregation ($\alpha${SA}).
\newblock {\em SIAM J. Sci. Comput.}, 25(6):1896--1920, 2004.

\bibitem{montvay1994quantum}
I.~Montvay and G.~M{\"u}nster.
\newblock {\em Quantum Fields on a Lattice}.
\newblock Cambridge Monographs on Mathematical Physics Series. Cambridge
  University Press, 1994.

\bibitem{Osborne_POS_2010}
J.~C. Osborn, R.~Babich, J.~Brannick, R.~C. Brower, M.~A. Clark, et~al.
\newblock Multigrid solver for clover fermions.
\newblock {\em PoS}, LATTICE2010:037, 2010.

\bibitem{IOPORT.05602071}
Constantin Popa.
\newblock Algebraic multigrid smoothing property of {K}aczmarz's relaxation for
  general rectangular linear systems.
\newblock {\em Electron. Trans. Numer. Anal.}, 29:150--162, electronic only,
  2007.

\bibitem{Sala:2008:NPS:1461600.1461602}
Marzio Sala and Raymond~S. Tuminaro.
\newblock A new {P}etrov-{G}alerkin smoothed aggregation preconditioner for
  nonsymmetric linear systems.
\newblock {\em SIAM J. Sci. Comput.}, 31(1):143--166, October 2008.

\bibitem{hans_arxiv}
H.~Sterck.
\newblock A self-learning algebraic multigrid method for extremal singular
  triplets and eigenpairs.
\newblock {\em SIAM J. Sci. Comput.}, 34(4):A2092--A2117, 2012.

\bibitem{Trottenberg:2000:MUL:374106}
Ulrich Trottenberg and Anton Schuller.
\newblock {\em Multigrid}.
\newblock Academic Press, Inc., Orlando, FL, USA, 2001.

\bibitem{Vassilevski_2005}
P.~Vassilevski.
\newblock {\em Multilevel Block Factorization Preconditioners: Matrix-based
  Analysis and Algorithms for Solving Finite Element Equations}.
\newblock Springer, 2009.

\bibitem{PhysRevD.10.2445}
Kenneth~G. Wilson.
\newblock Confinement of quarks.
\newblock {\em Phys. Rev. D}, 10:2445--2459, Oct 1974.

\end{thebibliography}
\bibliographystyle{plain}

\end{document}